\newtheorem{Theorem}{Theorem}[section]
\newaliascnt{Lemma}{Theorem}
\newtheorem{Lemma}[Lemma]{Lemma}
\newaliascnt{Proposition}{Theorem}
\newtheorem{Proposition}[Proposition]{Proposition}
\newaliascnt{Corollary}{Theorem}
\newtheorem*{Conjecture}{Conjecture}
\newaliascnt{Definition}{Theorem}
\theoremstyle{definition}
\newaliascnt{Example}{Theorem}
\newtheorem{Example}[Example]{Example}
\numberwithin{equation}{section}
\renewcommand{\phi}{\varphi}
\newcommand{\C}{\operatorname{C}}
\newcommand{\N}{\operatorname{N}}
\newcommand{\Z}{\operatorname{Z}}
\newcommand{\cohom}{\operatorname{H}}
\newcommand{\Aut}{\operatorname{Aut}}
\newcommand{\pcore}{\operatorname{O}}
\newcommand{\GL}{\operatorname{GL}}
\newcommand{\Irr}{\operatorname{Irr}}
\newcommand{\IBr}{\operatorname{IBr}}
\newcommand{\tr}{\operatorname{tr}}
\mathchardef\ordinarycolon\mathcode`\:  
\newcommand\overmat[3]{%
  \makebox[0pt][l]{$\smash{\overbrace{\phantom{%
    \begin{matrix}#3\end{matrix}}}^{\textstyle{#1}}}$}#2} 
\newcommand{\diagdots}[3][-25]{%
  \rotatebox{#1}{\makebox[0pt]{\makebox[#2]{\xleaders\hbox{$\cdot$\hskip#3}\hfill\kern0pt}}}%
}
\title{Refinements of the orthogonality\\ relations for blocks}
\author{Benjamin Sambale\footnote{Fachbereich Mathematik, TU Kaiserslautern, 67653 Kaiserslautern, Germany, 
\href{mailto:sambale@mathematik.uni-kl.de}{sambale@mathematik.uni-kl.de}}}
\date{\today}
\begin{document}
\frenchspacing
\maketitle

\begin{abstract}\noindent
For a block $B$ of a finite group $G$ there are well-known orthogonality relations for the generalized decomposition numbers. We refine these relations by expressing the generalized decomposition numbers with respect to an integral basis of a certain cyclotomic field. After that, we use the refinements in order to give upper bounds for the number of irreducible characters (of height $0$) in $B$. In this way we generalize results from [Héthelyi-Külshammer-Sambale, 2014]. These ideas are applied to blocks with abelian defect groups of rank $2$. Finally, we address a recent conjecture by Navarro. 
\end{abstract}

\textbf{Keywords:} orthogonality relations, $k(B)$, abelian defect groups, Navarro Conjecture\\
\textbf{AMS classification:} 20C15, 20C20

\section{Introduction}
Let $B$ be a $p$-block of a finite group $G$ with defect group $D$. Let $k(B):=\lvert\Irr(B)\rvert$ and $l(B):=\lvert\IBr(B)\rvert$. 
Recall that the \emph{height} $i\ge 0$ of $\chi\in\Irr(B)$ is defined by $\chi(1)_p=p^i|G:D|_p$. We denote the number of irreducible characters of height $i$ in $B$ by $k_i(B)$.
For $u\in D$ let $b_u$ be a Brauer correspondent of $B$ in $\C_G(u)$, i.\,e. $(u,b_u)$ is a \emph{$B$-subsection}. Let $Q=(d^u_{\chi\phi}:\chi\in\Irr(B),\phi\in\IBr(b_u))$ be the matrix of generalized decomposition numbers with respect to $(u,b_u)$. By a result of Broué~\cite[Corollary~2]{BroueSanta}, it is known that the rows of $Q$ corresponding to irreducible characters of height $0$ do not vanish (see also \cite[Proposition~1.36]{habil}). Moreover, by the orthogonality relations, $C_u:=Q^{\text{T}}\overline{Q}$ is the Cartan matrix of $b_u$ ($Q^\text{T}$ is the transpose and $\overline{Q}$ is the complex conjugate of $Q$). Since $Q$ consists of algebraic integers in a cyclotomic field, it is possible to bound $k_0(B)$ in terms of $C_u$. If $u\in\Z(D)$, then the subsection is \emph{major} which means that $B$ and $b_u$ have the same defect. Then, by a result of Brauer~\cite[(5H)]{BrauerBlSec2}, every row of $Q$ does not vanish and we get a bound on $k(B)$ in terms of $C_u$. In many cases these bounds do not rely on the \emph{basic set} chosen for $b_u$. Recall that a basic set for $b_u$ is a basis of the $\mathbb{Z}$-module of generalized Brauer characters spanned by $\IBr(b_u)$. If one changes the basic set for $b_u$, then the matrix $C_u$ is transformed into $SC_uS^\text{T}$ where $S\in\GL(l(b_u),\mathbb{Z})$.

We will recall and rephrase two results in that direction which were first published in \cite{HKS}.

\begin{Proposition}\label{outer}
Let $(u,b_u)$ be a $B$-subsection such that $b_u$ dominates a block $\overline{b_u}$ of $\C_G(u)/\langle u\rangle$ with Cartan matrix $\overline{C_u}=(c_{ij})$ up to basic sets.
Then for every positive definite, integral quadratic form
$q(x_1,\ldots,x_{l(b_u)})=\sum_{1\le i\le j\le l(b_u)}{q_{ij}x_ix_j}$ ($q_{ij}\in\mathbb{Z}$)
we have
\[k_0(B)\le|\langle u\rangle|\sum_{1\le i\le j\le l(b_u)}{q_{ij}c_{ij}}.\]
If $(u,b_u)$ is major, then we can replace $k_0(B)$ by $k(B)$ in this formula.
\end{Proposition}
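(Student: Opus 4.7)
The plan is to expand each generalized decomposition number with respect to a $\mathbb{Z}$-basis of $\mathbb{Z}[\zeta_n]$, where $n:=|\langle u\rangle|$, to combine Broué's nonvanishing theorem with the positive definiteness of $q$ so as to obtain a contribution of at least $1$ from every $\chi\in\Irr_0(B)$, and finally to identify the resulting sum with $n\sum q_{ij}c_{ij}$ via a refinement of the orthogonality relation $C_u=Q^{\text{T}}\overline{Q}$.

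Set $\zeta:=\zeta_n$ and fix a $\mathbb{Z}$-basis $\omega_1,\ldots,\omega_t$ of $\mathbb{Z}[\zeta]$, where $t=\varphi(n)$. For $\chi\in\Irr(B)$ and $\phi\in\IBr(b_u)$, write
\[
d^u_{\chi\phi}=\sum_{k=1}^{t}a^{(k)}_{\chi\phi}\omega_k\qquad(a^{(k)}_{\chi\phi}\in\mathbb{Z}),
\]
and let $v^{(k)}_\chi:=\bigl(a^{(k)}_{\chi\phi_1},\ldots,a^{(k)}_{\chi\phi_{l(b_u)}}\bigr)\in\mathbb{Z}^{l(b_u)}$ and $A_k:=(a^{(k)}_{\chi\phi})_{\chi,\phi}$. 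By Broué's theorem, the row of $Q$ indexed by any $\chi\in\Irr_0(B)$ is nonzero, so at least one $v^{(k)}_\chi$ is a nonzero vector of $\mathbb{Z}^{l(b_u)}$. Since $q$ is integral and positive definite, $q(w)\in\mathbb{Z}_{\ge 1}$ for every nonzero $w\in\mathbb{Z}^{l(b_u)}$, and hence $\sum_{k}q(v^{(k)}_\chi)\ge 1$ for each such $\chi$.

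The crucial ingredient is the integer-matrix identity
\[
\sum_{k=1}^{t}A_k^{\text{T}}A_k=n\,\overline{C_u},
\]
which refines $Q^{\text{T}}\overline{Q}=C_u$. The factor $n$ reflects the fact that $b_u$ dominates $\overline{b_u}$: in compatible basic sets one has $C_u=n\overline{C_u}$. The cancellation of the cross-terms $A_k^{\text{T}}A_l\,\omega_k\overline{\omega_l}$ ($k\ne l$) that arise when substituting $Q=\sum_kA_k\omega_k$ into $Q^{\text{T}}\overline{Q}$ is the subtle step and requires either an adapted choice of integral basis or an averaging over $\Gal(\mathbb{Q}(\zeta)/\mathbb{Q})$; this is exactly the refinement of the orthogonality relations announced in the abstract and is the step I expect to be the main technical obstacle.

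Granted the identity, summing over $\chi\in\Irr(B)$ and swapping summation order gives
\[
k_0(B)\le\sum_{\chi\in\Irr(B)}\sum_{k=1}^{t}q(v^{(k)}_\chi)=\sum_{1\le i\le j\le l(b_u)}q_{ij}\Bigl(\sum_{k}A_k^{\text{T}}A_k\Bigr)_{ij}=n\sum_{1\le i\le j\le l(b_u)}q_{ij}c_{ij},
\]
which is the claimed bound. In the major case $u\in\Z(D)$, Brauer's theorem~\cite[(5H)]{BrauerBlSec2} ensures that \emph{every} row of $Q$ is nonzero, so exactly the same argument applies with $k_0(B)$ replaced by $k(B)$.
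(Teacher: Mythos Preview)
Your overall architecture is right—expand in an integral basis, use Broué/Brauer nonvanishing to get a contribution $\ge 1$ per character, and sum—but the linchpin identity
\[
\sum_{k=1}^{t}A_k^{\text{T}}A_k \;=\; n\,\overline{C_u}
\]
is \emph{false}, so the plan breaks exactly at the step you flagged as the obstacle. A minimal counterexample: take $|\langle u\rangle|=p=3$, $l(b_u)=1$, $\mathcal{N}=1$, and any integral basis of $\mathbb{Z}[\zeta_3]$ (say $\zeta,\zeta^2$). Writing $d^u_\chi=a_\chi\zeta+b_\chi\zeta^2$ and using both $\sum_\chi|d^u_\chi|^2=3c_{11}$ and the orthogonality $\sum_\chi d^u_\chi\overline{d^{u^2}_\chi}=0$, one finds $\sum_\chi a_\chi^2=\sum_\chi b_\chi^2=2c_{11}$, hence $\sum_kA_k^{\text{T}}A_k=4c_{11}\ne 3c_{11}$. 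No choice of integral basis repairs this: there is no integral basis of $\mathbb{Z}[\zeta_p]$ that is orthonormal for the Hermitian trace form, so the cross terms $A_k^{\text{T}}A_l$ with $k\ne l$ never all vanish. Galois averaging does not make them vanish either; it only lets you \emph{compute} them (this is precisely what Theorem~\ref{general} does, and the resulting matrix $M$ is not $\overline{C_u}\otimes I$).

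The paper itself does not reprove the proposition: it just notes that $C_u=|\langle u\rangle|\,\overline{C_u}$ and invokes \cite[Theorem~4.2]{habil}. If you want a self-contained argument in the spirit of your plan, the fix is \emph{not} to discard the cross terms but to keep the full integer matrix $A=(A_1\,|\,\cdots\,|\,A_t)\in\mathbb{Z}^{k(B)\times tl}$ and bound the number of nonzero rows via a quadratic form on $\mathbb{Z}^{tl}$ rather than on $\mathbb{Z}^{l}$. Concretely, $A^{\text{T}}A=\overline{C_u}\otimes T$ for a certain $t\times t$ integer matrix $T$ (this tensor factorisation is exactly what one gets by setting $\mathcal{N}=1$ in Theorem~\ref{general}), and then one applies the \emph{tensor} form $q\otimes q_A$, using \autoref{lemtensor} to guarantee that it still takes values $\ge 1$ on nonzero integer vectors. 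The inner factor evaluates to $\sum_{i\le j}(q_A)_{ij}T_{ij}=k_0(\langle u\rangle)=|\langle u\rangle|$ by \autoref{semidirect} with trivial $\mathcal{N}$, which produces the factor $|\langle u\rangle|$ in the bound. So the ``cancellation'' you were hoping for is replaced by a tensor-product trick; without it (or an equivalent device) the argument does not close.
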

\begin{proof}
It is well-known that the Cartan matrix of $b_u$ is given by $|\langle u\rangle|\overline{C_u}$ (see for example \cite[Theorem~9.10]{Navarro}). Hence, the result follows directly from \cite[Theorem~4.2]{habil}.
\end{proof}

Recall that the Brauer characters of $b_u$ are just inflations of the Brauer characters of $\overline{b_u}$. Therefore, we will often identify $\IBr(\overline{b_u})$ by $\IBr(b_u)$.

If $l(b_u)=1$, then the inequality in \autoref{outer} becomes $k_0(B)\le|R|$ where $R$ is a defect group of $b_u$ (after conjugation one may assume that $R=\C_D(u)$). In this case it is possible to improve the factor $|\langle u\rangle|$ in \autoref{outer}. 
In order to do so, one expresses the generalized decomposition numbers $d^u_{\chi\phi}$ with respect to an integral basis of the cyclotomic field $\mathbb{Q}(e^{2\pi i/|\langle u\rangle|})$. This can be thought of as a discrete Fourier transformation.

\begin{Proposition}\label{mainfusion}
Let $(u,b_u)$ be a $B$-subsection such that $l(b_u)=1$ and $\mathcal{N}:=\N_G(\langle u\rangle,b_u)/\C_G(u)$. Suppose that $b_u$ dominates a block $\overline{b_u}$ of $\C_G(u)/\langle u\rangle$ with defect $d$.
Then
\[k_0(B)\le k_0\bigl(\langle u\rangle\rtimes\mathcal{N}\bigr)p^d.\]
If $(u,b_u)$ is major, then $\lvert\mathcal{N}\rvert\mid p-1$ and
\[k(B)\le\sum_{i=0}^{\infty}{p^{2i}k_i(B)}\le k\bigl(\langle u\rangle\rtimes\mathcal{N}\bigr)|D/\langle u\rangle|.\]
\end{Proposition}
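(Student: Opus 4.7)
The plan is to perform the discrete Fourier transform of the generalized decomposition numbers $d^u_\chi$ alluded to immediately before the statement, making essential use of the $\mathcal{N}$-action on $\mathbb{Z}[\zeta_n]$, where $n:=|\langle u\rangle|$. Since $l(b_u)=1$ and $\overline{b_u}$ has defect $d$, the Cartan matrix of $b_u$ is the $1\times 1$ scalar $np^d$, and the orthogonality relation collapses to
\[
\sum_{\chi\in\Irr(B)}d^u_\chi\,\overline{d^u_\chi}=np^d,\qquad d^u_\chi\in\mathbb{Z}[\zeta_n].
\]

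First, I would recall that for $k$ coprime to $n$ the automorphism $\sigma_k\in\Gal(\mathbb{Q}(\zeta_n)/\mathbb{Q})$ defined by $\sigma_k(\zeta_n)=\zeta_n^k$ sends $d^u_\chi$ to $d^{u^k}_\chi$, and that $\mathcal{N}$ corresponds, under $\Aut(\langle u\rangle)\cong\Gal(\mathbb{Q}(\zeta_n)/\mathbb{Q})$, exactly to those $k$ for which the subsections $(u^k,b_{u^k})$ are $G$-conjugate to $(u,b_u)$. I would then pick an $\mathcal{N}$-invariant $\mathbb{Z}$-basis $\omega_1,\dots,\omega_r$ of $\mathbb{Z}[\zeta_n]$ built out of $\mathcal{N}$-orbit sums of roots of unity, and expand $d^u_\chi=\sum_j a_{\chi,j}\omega_j$ with integer coefficients $a_{\chi,j}$. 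Rewriting the orthogonality relation as an integer identity on the coordinate vectors $(a_{\chi,j})_j$, and combining it with Broué's non-vanishing $d^u_\chi\neq 0$ for every height-$0$ character $\chi$, a suitable positivity estimate on the Gram form $\omega_i\overline{\omega_j}$ yields $k_0(B)\le r\cdot p^d$. A Clifford-theoretic count (matching $\mathcal{N}$-orbits on $\Irr(\langle u\rangle)$ with height-$0$ characters of the semidirect product) then identifies $r=k_0(\langle u\rangle\rtimes\mathcal{N})$.

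In the major case, Brauer's \cite[(5H)]{BrauerBlSec2} extends non-vanishing to every row of $Q$. Moreover, for $\chi$ of height $i$ the unique entry $d^u_\chi$ is divisible by $p^i$ in $\mathbb{Z}[\zeta_n]$, contributing an extra factor $p^{2i}$ in the estimate; this gives
\[
\sum_{i\ge 0}p^{2i}k_i(B)\le k(\langle u\rangle\rtimes\mathcal{N})\,|D/\langle u\rangle|,
\]
using $|D|=np^d$ in the major case. The claim $|\mathcal{N}|\mid p-1$ follows from the fact that $u\in\Z(D)$ forces the inertial quotient $\mathcal{N}$ to be a $p'$-group, combined with the observation that every $p'$-subgroup of $\Aut(\langle u\rangle)$ (cyclic for $p$ odd, with trivial odd part when $p=2$) has order dividing $p-1$. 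The principal difficulty is the identification $r=k_0(\langle u\rangle\rtimes\mathcal{N})$: one must show, working layer by layer along the chain $1<\langle u^{p^{m-1}}\rangle<\dots<\langle u\rangle$, that the $\mathcal{N}$-orbit sums of primitive roots of unity really do furnish an integral basis of $\mathbb{Z}[\zeta_n]$ whose cardinality matches the character count of the semidirect product obtained via Clifford theory.
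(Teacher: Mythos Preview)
Your overall strategy—expand the $d^u_\chi$ in an integral basis adapted to the $\mathcal{N}$-action and bound the number of nonzero coordinate vectors via orthogonality—is the right one, and is in fact what is carried out in \cite[Theorem~5.12]{habil} (which the paper's proof simply cites) and later in this paper in Lemmas~\ref{intbas} and~\ref{semidirect}. But your identification ``$r=k_0(\langle u\rangle\rtimes\mathcal{N})$'' is wrong, and this is not a minor slip: it misidentifies where the quantity $k_0(\langle u\rangle\rtimes\mathcal{N})$ actually enters.

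The $\mathcal{N}$-orbit sums of roots of unity cannot form a basis of $\mathbb{Z}[\zeta_n]$; they span the \emph{fixed ring} $\mathbb{Z}[\zeta_n]^{\mathcal{N}}$, whose rank is $\phi(n)/|\mathcal{N}|$. For $n=p$ and $|\mathcal{N}|=r\mid p-1$ this rank is $(p-1)/r$, whereas $k_0(\langle u\rangle\rtimes\mathcal{N})=(p-1)/r+r$. So a bound of the shape ``(basis size)$\cdot p^d$'' would be strictly stronger than the statement and is not what the orthogonality relations give. What one actually obtains is a Gram matrix $M=A^{\mathrm{T}}A$ for the integer coordinate matrix $A=(a_{\chi,j})$, and the number of nonzero rows is bounded by $\sum_{i\le j} q_{ij}m_{ij}$ for any integral positive definite form $q$ with $q(x)\ge 1$ on nonzero integer vectors. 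The real content—your ``suitable positivity estimate''—is the computation (carried out in \cite[Theorem~5.12]{habil}, and again in Lemma~\ref{semidirect}) that with $q$ of Dynkin type $A$ this sum equals $k_0(\langle u\rangle\rtimes\mathcal{N})\cdot p^d$. Conceptually, this happens because the \emph{same} matrix $M$ governs the subsection $(u,b_u)$ in the principal block of $\langle u\rangle\rtimes\mathcal{N}$ itself, where the bound is attained; it is not a counting match between basis elements and characters.

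Two further points you gloss over. First, the case $p=2$ needs a genuinely separate argument: the paper splits according to whether $\mathcal{N}$ contains an element $\equiv -5^n\pmod{|\langle u\rangle|}$, invoking \cite[Theorem~5.2]{habil} in one case and \autoref{outer} in the other, together with an explicit computation of $k_0(\langle u\rangle\rtimes\mathcal{N})$. Second, in the non-major case $|\mathcal{N}|$ need not be prime to $p$; the paper writes $|\mathcal{N}|=p^sr$ with $r\mid p-1$ and $s\ge 0$, and both the integral-basis construction and the character count of $\langle u\rangle\rtimes\mathcal{N}$ must accommodate the factor $p^s$. Only in the major case does one get $s=0$, via \cite[Proposition~I.2.5]{AKO}.
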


The proof of this result will be given in the next section.
The reason that the group $\langle u\rangle\rtimes\mathcal{N}$ appears in \autoref{mainfusion} is related to the fact that the principal block of this group has a subsection where the generalized decomposition numbers satisfy exactly the same orthogonality relations as in the group $G$ (see \autoref{semidirect} below).

The two theorems above are duals of one another in the sense that the first uses $\C_G(u)/\langle u\rangle$ while the second uses $\langle u\rangle\rtimes\N_G(\langle u\rangle,b_u)/\C_G(u)$. 
The aim of the present paper is to unify both results by making use of the entire group $\N_G(\langle u\rangle,b_u)$ (and dropping the assumption $l(b_u)=1$ in \autoref{mainfusion}). In order to achieve this, we develop two new ideas. The first is to allow also non-integral quadratic forms in \autoref{outer} (see \autoref{lemtensor}) and secondly we choose a suitable integral basis for certain intermediate fields of cyclotomic fields (see \autoref{intbas}). This will simplify the complicated calculations in \cite[Section~5.2]{habil}.
After this has been done, we will apply our main theorem to blocks with abelian defect groups of rank $2$ (see \autoref{2rank}). In Section~4 we give another application by addressing a recent conjecture by Navarro (see \autoref{navarro}). Finally, in the last section some concluding remarks and open questions are presented.

Our notation is mostly standard and can be found in \cite{Navarro} and \cite{habil}. 
In particular, the \emph{inertial quotient} of $B$ is defined by $I(B):=\N_G(D,b_D)/\C_G(D)D$ where $b_D$ is a Brauer correspondent of $B$ in $\C_G(D)$.
In the situation above, we will often regard $\mathcal{N}:=\N_G(\langle u\rangle,b_u)/\C_G(u)$ as a subgroup of $\Aut(\langle u\rangle)$. We will even identify $\mathcal{N}$ with a subgroup of $(\mathbb{Z}/p^n\mathbb{Z})^\times$ where $p^n$ is the order of $u$. Since the Galois group $\mathcal{G}$ of the cyclotomic field of degree $p^n$ is naturally isomorphic to $(\mathbb{Z}/p^n\mathbb{Z})^\times$ as well, we will sometimes consider $\mathcal{N}$ as a subgroup of $\mathcal{G}$. Finally, we identify the elements of $(\mathbb{Z}/p^n\mathbb{Z})^\times$ with integers where we silently compute modulo $p^n$.

In accordance with the definition of $k(B)$ we also use $k(G):=\lvert\Irr(G)\rvert$ and $k_i(G)$ in the obvious meaning.
We write actions from the left; for instance $(^g\chi)(h):=\chi(g^{-1}hg)$ where $g,h\in G$ and $\chi$ is a character of a subgroup of $G$. For a finite group $H$ we often use the abbreviation $H^n:=H\times\ldots\times H$ ($m$ copies).
A cyclic group of order $n$ is denoted by $Z_n$. Moreover, $\mathfrak{S}_n$ is the symmetric group and $\mathfrak{A}_n$ is the alternating group of degree $n$.

\section{Refinements of the orthogonality relations}

\begin{proof}[Proof of \autoref{mainfusion}.]
We first deal with the case $p=2$ which is entirely different. Let $\mathcal{F}$ be the fusion system of $B$. After conjugation, we may assume that $\langle u\rangle$ is a fully $\mathcal{F}$-normalized subgroup of $D$. Then $\C_D(u)$ is a defect group of $b_u$ and $\mathcal{N}\cong\N_D(\langle u\rangle)/\C_D(u)$ (see \cite[Section~5.1]{habil}). Suppose that there exists $\gamma\in\mathcal{N}$ such that $\gamma\equiv -5^n\pmod{|\langle u\rangle|}$ for some $n\in\mathbb{Z}$. Then \cite[Theorem~5.2]{habil} implies $k_0(B)\le 2\lvert\N_D(\langle u\rangle)/\langle u\rangle\rvert=2\lvert\mathcal{N}\rvert p^d$. By taking powers of $\gamma$ we obtain that $u$ is $\mathcal{N}$-conjugate to $u^{-1}$. In particular, the commutator subgroup of $\langle u\rangle\rtimes\mathcal{N}$ is generated by $\langle u^2\rangle$. Hence, $k_0(\langle u\rangle\rtimes\mathcal{N})=2\lvert\mathcal{N}\rvert$ and the claim follows. Next we suppose that there is no element $\gamma$ as above. Since
\[(\mathbb{Z}/|\langle u\rangle|\mathbb{Z})^\times\cong\langle -1+|\langle u\rangle|\mathbb{Z}\rangle\times\langle 5+|\langle u\rangle|\mathbb{Z}\rangle,\]
we see that $\mathcal{N}$ is generated by some $\gamma\equiv 5^{2^n}\pmod{|\langle u\rangle|}$ where $2^{n+2}|\mathcal{N}|=|\langle u\rangle|$. The commutator subgroup of $\langle u\rangle\rtimes\mathcal{N}$ is generated by $u^{5^{2^n}-1}$. Since $5^{2^n}-1$ is divisible by $2^{n+2}$ but not by $2^{n+3}$, we derive $|\langle u^{5^{2^n}-1}\rangle|=|\mathcal{N}|$ and $k_0(\langle u\rangle\rtimes\mathcal{N})=|\langle u\rangle|$. Now the claim already follows from \autoref{outer}.

For the remainder of the proof we assume that $p>2$.
With the notation of \cite[Theorem~5.12]{habil} we have $|\mathcal{N}|=p^sr$ where $r\mid p-1$ and $s\ge 0$. Thus, for the first claim is suffices to show that 
\[k_0\bigl(\langle u\rangle\rtimes\mathcal{N}\bigr)=\frac{|\langle u\rangle|+p^s(r^2-1)}{r}.\]
The inflations from $\mathcal{N}$ yield $p^sr$ linear characters in $\langle u\rangle\rtimes\mathcal{N}$, since $\mathcal{N}$ is cyclic (as a subgroup of $\Aut(\langle u\rangle)$). Now let $1\ne\lambda\in\Irr(\langle u\rangle)$. If the length of the orbit of $\lambda$ under $\mathcal{N}$ is divisible by $p$, then the irreducible characters of $\langle u\rangle\rtimes\mathcal{N}$ lying over $\lambda$ all have positive height. Hence, we may assume that $\lambda^{|\langle u^{p^s}\rangle|}=1$. 
Then, by Clifford theory, $\lambda$ extends in $p^s$ many ways to $\langle u\rangle\rtimes\mathcal{N}_p$ where $\mathcal{N}_p$ is the Sylow $p$-subgroup of $\mathcal{N}$. All these extensions induce to irreducible characters of $\langle u\rangle\rtimes\mathcal{N}$ of height $0$. We have $(|\langle u^{p^s}\rangle|-1)/r$ choices for $\lambda$. Thus, in total we obtain 
\[k_0\bigl(\langle u\rangle\rtimes\mathcal{N}\bigr)=p^sr+p^s\frac{|\langle u^{p^s}\rangle|-1}{r}=\frac{|\langle u\rangle|+p^s(r^2-1)}{r}.\]

Now suppose that $(u,b_u)$ is major. Then $p^d=|D/\langle u\rangle|$, and it suffices to show that $s=0$. This follows for example from \cite[Proposition~I.2.5]{AKO}.
\end{proof}

Suppose that we have dropped the assumption $l(b_u)=1$ in \autoref{mainfusion}. Then the main obstacle in the proof is that $\mathcal{N}:=\N_G(\langle u\rangle,b_u)/\C_G(u)$ can act non-trivially on $\IBr(b_u)$. This means that $\mathcal{N}$ also acts on the columns of the generalized decomposition matrix corresponding to $(u,b_u)$. Consequently, we have to be careful when changing basic sets, since this might disturb the action of $\mathcal{N}$. 
For these reasons the most general result is quite unhandy:

\begin{Theorem}\label{general}
Let $B$ be a $p$-block of a finite group $G$.
Let $(u,b_u)$ be a $B$-subsection such that $b_u$ dominates a block $\overline{b_u}$ of $\C_G(u)/\langle u\rangle$.
Let $l:=l(b_u)$, and let $\overline{C_u}=(c_{\sigma\tau})$ be the Cartan matrix of $\overline{b_u}$ \textup{(}$\sigma,\tau\in\IBr(b_u)$\textup{)}. Let $|\langle u\rangle|=p^n$, $m:=p^{n-1}(p-1)$ and 
$\mathcal{N}:=\N_G(\langle u\rangle,b_u)/\C_G(u)$. 
For $i\in\mathbb{Z}$ let $i'\in\{1,\ldots,p^{n-1}\}$ such that $-i\equiv i'\pmod{p^{n-1}}$.
For $i,j\in\mathbb{Z}$, $\gamma\in\mathcal{N}$ and $\tau\in\IBr(b_u)$ let
\begin{align*}
w_{ij}^\tau(\gamma)&:=|\{\delta\in\mathcal{N}_\tau:p^n\mid i-j\gamma\delta\}|-|\{\delta\in\mathcal{N}_\tau:p^n\mid i+j'\gamma\delta\}|\\
&\mathrel{\phantom{:=}}+|\{\delta\in\mathcal{N}_\tau:p^n\mid i'-j'\gamma\delta\}|-|\{\delta\in\mathcal{N}_\tau:p^n\mid i'+j\gamma\delta\}|
\end{align*}
where $\mathcal{N}_\tau$ is the stabilizer of $\tau$ in $\mathcal{N}$. For $\sigma,\tau\in\IBr(b_u)$ we define $A_{\sigma\tau}=(a_{ij})_{i,j=1}^m$ by
\[a_{ij}:=\sum_{\gamma\in\mathcal{N}/\mathcal{N}_\tau}{c_{\sigma,{^\gamma\tau}}w_{i-1,j-1}^\tau(\gamma)}.\]
Let $M\in\mathbb{Z}^{ml\times ml}$ be the block matrix with blocks $\{A_{\sigma\tau}:\sigma,\tau\in\IBr(b_u)\}$. Let $k\in\mathbb{N}$ be maximal with the property that there exists a matrix $Q\in\mathbb{Z}^{k\times ml}$ without vanishing rows such that $Q^\textup{T}Q=M$. Then $k_0(B)\le k$ and if $(u,b_u)$ is major, we also have $k(B)\le k$.
\end{Theorem}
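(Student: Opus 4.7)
The plan is to refine the framework used in \autoref{outer} and \autoref{mainfusion} by simultaneously tracking the Galois action on the entries of the generalized decomposition matrix and the action of $\mathcal{N}$ on $\IBr(b_u)$. Throughout, set $\zeta:=e^{2\pi i/p^n}$ and let $Q=(d^u_{\chi\phi})_{\chi\in\Irr(B),\,\phi\in\IBr(b_u)}$, whose entries lie in $\mathbb{Z}[\zeta]$. By Broué's theorem the row of $Q$ corresponding to a height $0$ character does not vanish, and in the major case every row is non-zero. The orthogonality relations, combined with the fact that the Cartan matrix of $b_u$ equals $p^n\overline{C_u}$, yield $Q^\text{T}\overline{Q}=p^n\overline{C_u}$.

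The second step is to invoke \autoref{intbas} to pick a specific integral basis $\mathcal{B}=\{b_1,\ldots,b_m\}$ of $\mathbb{Z}[\zeta]$ over $\mathbb{Z}$ adapted to the intermediate field $\mathbb{Q}(\zeta_{p^{n-1}})\subseteq\mathbb{Q}(\zeta_{p^n})$. The role of this adaptation is precisely to produce the "primed" indices $i'$ appearing in the definition of $w^\tau_{ij}(\gamma)$. Expanding $d^u_{\chi\phi}=\sum_{i=1}^m a^{\chi,\phi}_ib_i$ with $a^{\chi,\phi}_i\in\mathbb{Z}$ produces an integer matrix $\widetilde{Q}\in\mathbb{Z}^{k(B)\times ml}$, with rows indexed by $\chi$ and columns indexed by pairs $(i,\phi)$, whose row for every height $0$ character (respectively every character, in the major case) is non-zero.

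The third and main step is to compute $\widetilde{Q}^\text{T}\widetilde{Q}$ and identify it with $M$. The inner product of the two columns of $\widetilde{Q}$ indexed by $(i,\sigma)$ and $(j,\tau)$ corresponds via the orthogonality relation to the coefficient of $b_i\overline{b_j}$ in the expansion of $(p^n\overline{C_u})_{\sigma\tau}$ in the induced basis $\mathcal{B}\otimes\overline{\mathcal{B}}$. Since changing basic sets in $b_u$ is not available when $\mathcal{N}$ acts non-trivially on $\IBr(b_u)$, one must instead average column contributions over $\mathcal{N}$-orbits; this is exactly why $a_{ij}$ is defined as $\sum_{\gamma\in\mathcal{N}/\mathcal{N}_\tau}c_{\sigma,\,{}^\gamma\tau}\,w^\tau_{i-1,j-1}(\gamma)$. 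The integer weights $w^\tau_{ij}(\gamma)$ then emerge as the coordinates of basis products after symmetrization by the stabilizer $\mathcal{N}_\tau$, and the four alternating counts reflect the $\pm 1$ contributions arising from coincidences of exponents modulo $p^n$ versus only modulo $p^{n-1}$. Because the Gram matrix of $\mathcal{B}$ need not be integral, the non-integral refinement \autoref{lemtensor} is essential in this step.

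Restricting $\widetilde{Q}$ to the rows of height $0$ yields a matrix in $\mathbb{Z}^{k_0(B)\times ml}$ without vanishing rows satisfying $\widetilde{Q}^\text{T}\widetilde{Q}=M$, so $k_0(B)\le k$ by the definition of $k$. In the major case the full matrix satisfies the same conditions, giving $k(B)\le k$. The main technical obstacle is the third step: pinning down the Gram coefficients of the basis provided by \autoref{intbas} and matching them exactly to the combinatorial expression for $w^\tau_{ij}(\gamma)$, all while coherently folding in the $\mathcal{N}$-action on $\IBr(b_u)$.
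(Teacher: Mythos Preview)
Your outline has the right overall architecture (expand the generalized decomposition numbers in an integral basis of $\mathbb{Z}[\zeta]$, show the resulting integer matrix $A$ satisfies $A^{\mathrm{T}}A=M$, then invoke Brou\'e's non-vanishing result), but the substance of the middle step is off in several places.

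First, the paper does \emph{not} use the basis of \autoref{intbas} here; it uses the ordinary power basis $v=(\zeta^0,\ldots,\zeta^{m-1})$. \autoref{intbas} concerns the ring of integers of the \emph{fixed field} of $\mathcal{N}$ (and only for odd $p$); it is invoked later in \autoref{semidirect} and \autoref{stablecase}, not in this theorem. Likewise \autoref{lemtensor} plays no role whatsoever in the present proof: no quadratic form is used, and there is no ``Gram matrix of $\mathcal{B}$'' entering the argument. Citing these two lemmas here is a misattribution.

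Second, the actual mechanism producing the formula for $a_{ij}$ is missing from your sketch. The paper sets up, for each pair $\sigma,\tau$, a linear system for the unknown entries of $A_\sigma^{\mathrm{T}}A_\tau$: applying every pair of Galois automorphisms $(\gamma,\delta)$ to the orthogonality relation yields $(\gamma(v)\otimes\delta(\overline{v}))X = p^n c_{\sigma,{}^{\gamma\delta^{-1}}\tau}$ when $\gamma^{-1}\delta\in\mathcal{N}$, and $0$ otherwise. This is where the $\mathcal{N}$-action on $\IBr(b_u)$ enters, via the identity $d^{u^\delta}_\tau=d^{u^\gamma}_{{}^{\gamma\delta^{-1}}\tau}$; it is not an a posteriori ``averaging over orbits''. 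The coefficient matrix is the Kronecker square of a Vandermonde matrix, and the paper inverts it \emph{explicitly} using the formula $W^{-1}=p^{-n}(\gamma(t_{i-1}))$ with $t_i=\zeta^{-i}-\zeta^{i'}$ from \cite[Lemma~5.5]{habil}. The primed index $i'$ comes from this inverse, not from any intermediate-field adaptation. Expanding the products $t_i\,(\gamma\delta)(\overline{t_j})$ into four powers of $\zeta$ and summing each over $(\mathbb{Z}/p^n\mathbb{Z})^\times$ via the cyclotomic polynomial is what produces the four signed counts in $w_{ij}^\tau(\gamma)$; the terms involving divisibility only by $p^{n-1}$ cancel pairwise, leaving exactly the $p^n$-divisibility conditions.

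Finally, your last paragraph is slightly misphrased: restricting $A$ to the height-$0$ rows does \emph{not} in general give a matrix whose Gram matrix is $M$, since rows of positive height need not vanish. What one does is delete the vanishing rows of $A$; the resulting matrix still has Gram matrix $M$ and at least $k_0(B)$ rows (all of them in the major case), which yields $k_0(B)\le k$ (respectively $k(B)\le k$).
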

\begin{proof}
For a fixed $\sigma\in\IBr(b_u)$ we set $d_\sigma^u:=(d_{\chi\sigma}^u:\chi\in\Irr(B))$. Let $\zeta:=e^{2\pi i/p^n}$. Then $d_{\chi\sigma}^u\in\mathbb{Z}[\zeta]$ and $v:=(\zeta^i:i=0,\ldots,m-1)$ is a $\mathbb{Z}$-basis of $\mathbb{Z}[\zeta]$ (see \cite[Proposition~I.10.2]{NeukirchE}). Hence there exists a matrix $A_\sigma\in\mathbb{Z}^{k(B)\times m}$ such that $d_\sigma^u=A_\sigma v$. Let $A:=(A_\sigma:\sigma\in\IBr(b_u))\in\mathbb{Z}^{k(B)\times ml}$. 
By \cite[Proposition~1.36]{habil}, the rows of $A$ corresponding to irreducible characters of height $0$ do not vanish. If $(u,b_u)$ is major, then all the rows of $A$ do not vanish. Consequently, it suffices to show that $A^\text{T}A=M$. This matrix is built from blocks of the form $A_\sigma^\text{T}A_\tau$. Thus we need to show that $A_\sigma^\text{T}A_\tau=A_{\sigma\tau}$.

The orthogonality relation implies 
\[v^\text{T}A_\sigma^\text{T}A_\tau\overline{v}=(d_\sigma^u)^\text{T}\overline{d^u_\tau}=p^nc_{\phi\tau},\]
since $p^n\overline{C_u}$ is the Cartan matrix of $b_u$.
This can be rewritten in the form
\[(v\otimes \overline{v})X=p^nc_{\sigma\tau}\]
where $\otimes$ denotes the Kronecker product and $X$ is the vectorization $A_\sigma^\text{T}A_\tau$. 
If $\gamma,\delta\in(\mathbb{Z}/p^n\mathbb{Z})^\times$ such that $\gamma^{-1}\delta\notin\mathcal{N}$, then $(d^{u^\gamma}_\sigma)^\text{T}d^{u^{\delta}}_\tau=0$. This gives another equation
\[(\gamma(v)\otimes\delta(\overline{v}))X=0.\]
Note that we consider $\gamma$ and $\delta$ here as Galois automorphisms of $\mathbb{Q}(\zeta)$.
Now let $\gamma^{-1}\delta\in\mathcal{N}$. Then $d^{u^\delta}_\tau=d^{u^\gamma}_{\tau'}$ with $\tau':={^{\gamma\delta^{-1}}\tau}\in\IBr(b_u)$. In this case we obtain
\[(\gamma(v)\otimes\delta(\overline{v}))X=p^nc_{\sigma\tau'}.\]
Let $V$ be the matrix with rows $(\gamma(v)\otimes\delta(\overline{v}):\gamma,\delta\in(\mathbb{Z}/p^n\mathbb{Z})^\times)$. Then we have a linear system $VX=b$ where $b$ contains zeros and some $p^nc_{\sigma\tau'}$ as above. Observe that $W=(\gamma(v):\gamma\in
(\mathbb{Z}/p^n\mathbb{Z})^\times)$ is a Vandermonde matrix and $V=W\otimes \overline{W}$. 
By \cite[Lemma~5.5]{habil}, we have $W^{-1}=p^{-n}(\gamma(t_{i-1}))$ with $t_i:=\zeta^{-i}-\zeta^{i'}$ where $\gamma$ now runs through the columns of $W^{-1}$. Since $X=V^{-1}b=(W^{-1}\otimes\overline{W}^{-1})b$, the entry of $A_\sigma^\text{T}A_\tau$ at position $(i+1,j+1)$ is
\begin{align}\label{cycsum}
p^{-n}\sum_{\substack{\gamma,\delta\in(\mathbb{Z}/p^n\mathbb{Z})^\times,\\\gamma^{-1}\delta\in\mathcal{N}}}{\gamma(t_j)\delta(\overline{t_i})c_{\sigma,{^{\gamma\delta^{-1}}\tau}}}&=p^{-n}\sum_{\gamma\in\mathcal{N}}\sum_{\mu\in(\mathbb{Z}/p^n\mathbb{Z})^\times}{
\mu(\overline{t_i}\gamma(t_j))c_{\sigma,{^{\gamma}\tau}}}\nonumber\\
&=p^{-n}\sum_{\gamma\in\mathcal{N}/\mathcal{N}_\tau}c_{\sigma,{^\gamma\tau}}\sum_{\delta\in\mathcal{N}_\tau}\sum_{\mu\in(\mathbb{Z}/p^n\mathbb{Z})^\times}{\mu(t_i(\gamma\delta)(\overline{t_j}))}.
\end{align}
Now, 
\[t_i(\gamma\delta)(\overline{t_j})=(\zeta^{-i}-\zeta^{i'})(\zeta^{\gamma\delta j}-\zeta^{-\gamma\delta j'})=\zeta^{-i+\gamma\delta j}-\zeta^{-i-\gamma\delta j'}+\zeta^{i'-\gamma\delta j'}-\zeta^{i'+\gamma\delta j}.\]
Since the $p^n$-th cyclotomic polynomial is given by $\Phi_{p^n}(x)=x^{(p-1)p^{n-1}}+x^{(p-2)p^{n-1}}+\ldots+x^{p^{n-1}}+1$, it follows that
\[\sum_{\mu\in(\mathbb{Z}/p^n\mathbb{Z})^\times}{\mu(\zeta^{-i+\gamma\delta j})}=\begin{cases} m&\text{if }p^n\mid {i-\gamma\delta j},\\
0&\text{if }p^{n-1}\nmid {i-\gamma\delta j},\\
-p^{n-1}&\text{otherwise}.\end{cases}\]
An application of this leads us to
\[\sum_{\delta\in\mathcal{N}_\tau}\sum_{\mu\in(\mathbb{Z}/p^n\mathbb{Z})^\times}{\mu(\zeta^{-i+\gamma\delta j})}=p^n|\{\delta\in\mathcal{N}_\tau:p^n\mid i-\gamma\delta j\}|-p^{n-1}|\{\delta\in\mathcal{N}_\tau:p^{n-1}\mid i-\gamma\delta j\}|.\]
We get similar expressions for the other terms $i+\gamma\delta j'$, $i'-\gamma\delta j'$ and $i'+\gamma\delta j$ in  \eqref{cycsum}. Since $i+i'\equiv 0\equiv j+j'\pmod{p^{n-1}}$, we have $i-\gamma\delta j\equiv i+\gamma\delta j'\equiv -i'+\gamma\delta j'\equiv -i'-\gamma\delta j\pmod{p^{n-1}}$. Hence, the terms of the form $p^{n-1}|\{\ldots\}|$ in \eqref{cycsum} cancel out each other. It turns out that \eqref{cycsum} coincides with $a_{ij}$. This proves the claim.
\end{proof}

The important point of \autoref{general} is the fact that the matrix $M$ is uniquely determined by $\overline{C_u}$, $\mathcal{N}$ and the action on $\IBr(b_u)$. Therefore, the result can indeed be seen as a refinement of the usual orthogonality relations for blocks. The numbers $w_{ij}^\tau(\gamma)$ can be evaluated individually (see \cite[Lemma~5.9]{habil}), but the entire matrix $M$ seems to have an extremely opaque shape.
Nevertheless, from an algorithmic point of view, it is trivial to construct $M$. However, the computation of $k$ is a hard problem. There is a sophisticated algorithm by Plesken~\cite{Plesken} which can be used if $ml$ is small. In general one can use estimates of $k$ as in \autoref{outer}. Note also that $M$ usually does not have full rank. On the one hand, this is because the generalized decomposition numbers usual lie in proper subfields of $\mathbb{Q}(\zeta)$. On the other hand, $A_{\sigma\tau}=A_{^\gamma\sigma,{^\gamma\tau}}$ for $\sigma,\tau\in\IBr(b_u)$ and $\gamma\in\mathcal{N}$. Hence, before one attempts to compute $k$, one should replace $M$ by a matrix of smaller size. In particular, one can apply the LLL reduction algorithm for lattices.
We give a complete example.

\begin{Example}\label{examp}
Let $B$ be the principal $3$-block of $G={^2F_4(2)'}$. Then $B$ has extraspecial defect group $D$ of order $p^3$ and exponent $p$. It can be seen from the Atlas~\cite{Atlas} that $G$ has only one conjugacy class of elements of order $3$. In particular, there is only one non-trivial $B$-subsection $(u,b_u)$ which must necessarily be major (in fact, the fusion system of $B$ has been determined in \cite{ExtraspecialExpp}). The group $\C_G(u)/\langle u\rangle$ is isomorphic to $Z_3^2\rtimes Z_4$ and the Cartan matrix $\overline{C_u}$ of $\overline{b_u}$ is given by $\overline{C_u}=(2+\delta_{ij})_{i,j=1}^4$ where $\delta_{ij}$ is the Kronecker delta (in particular, $l(b_u)=4$). Now $b_u$ is covered only by the principal block of $N:=\N_G(\langle u\rangle)$, since this subgroup has only one block. Moreover, $|N/\C_G(u)|=2$ and $l(N)=5$. Clifford theory reveals that $b_u$ has two $N$-invariant irreducible Brauer characters and the other two characters are conjugate under $N$. By the shape of $\overline{C_u}$, we may assume that the last two Brauer characters are stable. Then the matrix $M$ from \autoref{general} is given as
\[\begin{pmatrix}
8&1&7&-1&6&.&6&.\\
1&2&-1&-2&.&.&.&.\\
7&-1&8&1&6&.&6&.\\
-1&-2&1&2&.&.&.&.\\
6&.&6&.&9&.&6&.\\
.&.&.&.&.&.&.&.\\
6&.&6&.&6&.&9&.\\
.&.&.&.&.&.&.&.
\end{pmatrix}\]
and this reduces to
\[\begin{pmatrix}
2&1&1&1\\
1&5&2&2\\
1&2&5&2\\
1&2&2&8
\end{pmatrix}.\]
Now Plesken's algorithm gives $k=15$. Therefore $k(B)\le 15$. In comparison, \autoref{outer} implies only $k(B)\le 18$ and the actual value is $k(B)=13$ (\autoref{mainfusion} is not applicable). The gap between $13$ and $15$ can be explained as follows: $B$ contains four irreducible characters of height $1$. By Brauer's theory, the so-called \emph{contributions} of these characters have positive $p$-adic valuation (see \cite[Proposition~1.36]{habil}). This implies that the generalized decomposition numbers corresponding to these characters cannot be too “small”. However, note that the expression $\sum_{i=0}^{\infty}{p^{2i}k_i(B)}\le 15$ from \autoref{mainfusion} would not be true here. But we can still say something in this direction: Plesken's algorithm also shows that there is essentially only one solution $Q^\text{T}Q=M$ with $Q\in\mathbb{Z}^{15\times 4}$. For this solution matrix $Q$ we can revert all the calculations to obtain the generalized decomposition numbers. After that we can compute the contributions. This shows that $k(B)=15$ can only happen if $k(B)=k_0(B)$. Similarly, $k(B)=14$ would imply $k_0(B)=12$. 
\end{Example}

Our next goal is to simplify the situation if the Brauer characters are stable. This will generalize \autoref{mainfusion}. Therefore, we need to revisit and simplify the calculations in \cite[Section~5.2]{habil}.

\begin{Lemma}\label{lemtensor}
Let $q$ be a \textup{(}not necessarily integral\textup{)} quadratic form such that $q(x)\ge 1$ for all non-zero integral vectors $x$. Let $q_A(x)=\sum{x_i^2}-\sum{x_ix_{i+1}}$ be the quadratic form corresponding to a Dynkin diagram of type $A$. Then we have $(q\otimes q_A)(x)\ge 1$ for every integral $x\ne 0$.
\end{Lemma}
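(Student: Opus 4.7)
The plan is to arrange an integral vector in $\mathbb{Z}^{mn}$ as an $m\times n$ integer matrix $X$ with columns $X_1,\dots,X_n$, where $m$ denotes the number of variables of $q$ and $n$ the number of variables of $q_A$. Writing $b_q(\cdot,\cdot)$ for the symmetric bilinear form associated to $q$, the fact that $q\otimes q_A$ is represented by the Kronecker product of the Gram matrices of $q$ and $q_A$ yields directly
\[
(q\otimes q_A)(X)=\sum_{i=1}^{n}q(X_i)-\sum_{i=1}^{n-1}b_q(X_i,X_{i+1}).
\]

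Next I would use the standard ``sum of squares'' rewriting of $q_A$, but lifted to $q$-valued terms. Setting $X_0:=0=:X_{n+1}$ and expanding via $q(u-v)=q(u)+q(v)-2b_q(u,v)$, one checks that
\[
(q\otimes q_A)(X)=\frac{1}{2}\sum_{i=0}^{n}q(X_i-X_{i+1}).
\]
Each summand has the form $q(v)$ for an integer vector $v$, and therefore by hypothesis either vanishes or is at least $1$.

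It remains to show that at least two summands in this decomposition are nonzero. Let $j$ be the smallest and $k$ the largest index with $X_j\ne 0$ and $X_k\ne 0$; both exist because $X\ne 0$. The boundary convention then gives $X_{j-1}-X_j=-X_j\ne 0$ and $X_k-X_{k+1}=X_k\ne 0$, and these contribute at the two distinct positions $j-1$ and $k$ of the sum (indeed $j-1<j\le k$). Hence $\sum_{i=0}^{n}q(X_i-X_{i+1})\ge 2$, which yields $(q\otimes q_A)(X)\ge 1$.

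The only delicate point is the role of the boundary terms $X_0=X_{n+1}=0$: they contribute the ``extra'' summands $q(X_1)$ and $q(X_n)$ that guarantee a second nonzero term whenever $X\ne 0$, turning what would otherwise be a bound of $\frac{1}{2}$ into the desired bound of $1$. Once the extended convention is in place, no serious obstacle remains.
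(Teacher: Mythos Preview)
Your proof is correct and is essentially the paper's own argument: both reduce to the identity $(q\otimes q_A)(X)=\tfrac{1}{2}\sum_{i=0}^{n}q(X_i-X_{i+1})$ (with $X_0=X_{n+1}=0$) and then use that at least two summands are $\ge 1$. The only cosmetic difference is that the paper first passes from $q\otimes q_A$ to $q_A\otimes q$ via permutation similarity before writing down the same decomposition, whereas you obtain it directly by grouping the coordinates into columns.
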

\begin{proof}
Since $q_A$ is positive definite, there is no doubt that $q\otimes q_A$ is positive definite. 
However, we may have $0<(q\otimes q_A)(x)<1$.
It is known that $q\otimes q_A$ and $q_A\otimes q$ are isomorphic (the Gram matrices are permutation similar). Thus it suffices to show $(q_A\otimes q)(x)\ge 1$ for all integral $x\ne 0$. If $q$ has rank $n$ and $q_A$ has rank $m$, then we can write $x=(x_1,\ldots,x_m)$ with $x_i\in\mathbb{Z}^{n}$. It follows that
\[(q_A\otimes q)(x)=\frac{1}{2}\sum_{i=1}^{m-1}{q(x_i-x_{i+1})}+\frac{1}{2}\bigl(q(x_1)+q(x_m)\bigr).\]
The claim follows easily from the hypothesis on $q$.
\end{proof}

\autoref{lemtensor} is relevant, because in general a tensor product of two integral quadratic forms is not integral anymore. For, if $q=\sum_{i\le j}{q_{ij}x_ix_j}$ is integral, the coefficients $q_{ij}$ with $i\ne j$ lie in $\frac{1}{2}\mathbb{Z}$. Hence, the coefficients of $q_1\otimes q_2$ only lie in $\frac{1}{4}\mathbb{Z}$. Moreover, the minimum of a tensor product of quadratic forms is not necessarily the product of the minima (but “most” of the time it is, see \cite[Section~7.1]{Kitaoka}).

\begin{Lemma}\label{intbas}
Let $\zeta\in\mathbb{C}$ be a primitive $p^n$-th root of unity for an odd prime power $p^n$, and let $\mathcal{G}$ be the Galois group of $\mathbb{Q}(\zeta)$ over $\mathbb{Q}$. 
Let $\mathcal{N}\le\mathcal{G}$ such that $\lvert\mathcal{N}\rvert=p^sr$ with $r\mid p-1$ and $s\ge 0$.
For $i\ge 1$ let $S_i$ be a set of representatives for the $\pcore_{p'}(\mathcal{N})$-orbits of $\{1\le j\le p^i:(j,p)=1\}$. Let 
\begin{align*}
T_1&:=S_1,\\
T_i&:=pT_{i-1}\cup \{s+jp^{i-1}:s\in S_{i-1},\ j=0,\ldots,p-2\}\qquad\text{for }i\ge 2.
\end{align*}
Then 
\[\widetilde{T}_n:=\Bigl\{\sum_{\gamma\in\pcore_{p'}(\mathcal{N})}{\gamma(\zeta^t)}:t\in p^sT_{n-s}\Bigr\}\] 
is an integral basis for the fixed field of $\mathcal{N}$.
\end{Lemma}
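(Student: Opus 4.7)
My plan is to prove the statement by induction on $n$, after first reducing to the case $s = 0$.

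\textbf{Reduction to $s = 0$.} I will set $\eta := \zeta^{p^s}$, a primitive $p^{n-s}$-th root of unity, and $P := \pcore_{p'}(\mathcal{N})$. Since $P$ is a $p'$-group, the reduction map $(\mathbb{Z}/p^n\mathbb{Z})^\times \to (\mathbb{Z}/p^{n-s}\mathbb{Z})^\times$ embeds $P$ as the unique subgroup of order $r$ of the target, while the Sylow $p$-subgroup of $\mathcal{N}$ (of order $p^s$) fixes $\eta$ pointwise; hence the fixed field of $\mathcal{N}$ in $\mathbb{Q}(\zeta)$ equals the fixed field of $P$ in $\mathbb{Q}(\eta)$. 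Since $\sum_{\gamma \in P}\gamma(\zeta^{p^s t}) = \sum_\gamma \eta^{\gamma t}$ for $t \in T_{n-s}$, the problem reduces to the following: for $\eta$ a primitive $p^m$-th root (with $m := n-s$) and $P \le (\mathbb{Z}/p^m\mathbb{Z})^\times$ of order $r \mid p-1$, show that $\{\sum_{\gamma \in P}\eta^{\gamma t} : t \in T_m\}$ is an integral basis of $\mathbb{Z}[\eta]^P$.

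\textbf{Induction on $m$.} For the base case $m = 1$, I note that $\mathbb{Z}[\eta]$ has the integral basis $\{\eta, \eta^2, \ldots, \eta^{p-1}\}$ (derived from $\Phi_p(\eta) = 0$), which is a disjoint union of $P$-orbits in $(\mathbb{Z}/p\mathbb{Z})^\times$; hence $\mathbb{Z}[\eta]^P$ has integral basis given by the orbit sums $\{\sum_\gamma \eta^{\gamma t} : t \in S_1 = T_1\}$. For $m \ge 2$, I set $\eta' := \eta^p$ (a primitive $p^{m-1}$-th root) and let $\bar P$ denote the image of $P$ in $(\mathbb{Z}/p^{m-1}\mathbb{Z})^\times$ (isomorphic to $P$ via reduction). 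The identity $\sum_{\gamma}\eta^{\gamma pt} = \sum_\gamma (\eta')^{\gamma t}$ then shows that the ``old'' part $\{\alpha_s^{(m)} : s \in pT_{m-1}\}$ of the proposed set corresponds exactly to the integral basis of $\mathbb{Z}[\eta']^{\bar P}$ supplied by the inductive hypothesis.

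\textbf{The inductive step.} The crucial identity, for any $s$ coprime to $p$, is
\[\sum_{j=0}^{p-1} \eta^{s + jp^{m-1}} = \eta^s \sum_{j=0}^{p-1}\omega^j = 0, \qquad \omega := \eta^{p^{m-1}},\]
which after averaging over $P$ yields the relation $\alpha_{s + (p-1)p^{m-1}} = -\sum_{j=0}^{p-2}\alpha_{s + jp^{m-1}}$. Using that $\{\eta^i : 1 \le i \le \phi(p^m)\}$ is an integral basis of $\mathbb{Z}[\eta]$ (via $\Phi_{p^m}(\eta) = 0$) together with the surjectivity of the trace $\operatorname{Tr}_P : \mathbb{Z}[\eta] \to \mathbb{Z}[\eta]^P$ (which follows from the tame ramification of $\mathbb{Q}(\eta)/\mathbb{Q}(\eta)^P$ at the unique prime above $p$), the orbit sums $\{\alpha_i : 1 \le i \le \phi(p^m)\}$ generate $\mathbb{Z}[\eta]^P$ as a $\mathbb{Z}$-module. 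I will then express each such $\alpha_i$ in the $\mathbb{Z}$-span of $\{\alpha_t : t \in T_m\}$ by case analysis: if $p \mid i$, then $\alpha_i \in \mathbb{Z}[\eta']^{\bar P}$ and is handled by the old part; if $p \nmid i$, I pick $\gamma \in P$ with $\gamma i \equiv s \pmod{p^{m-1}}$ for some $s \in S_{m-1}$, write $\gamma i \equiv s + jp^{m-1} \pmod{p^m}$ with $j \in \{0, \ldots, p-1\}$, and observe that $\alpha_i = \alpha_{\gamma i}$ is either already in the set $T_m$ (when $j \le p-2$) or equals the explicit combination $-\sum_{j'=0}^{p-2}\alpha_{s+j'p^{m-1}}$ (when $j = p-1$). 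Since $|T_m| = \phi(p^m)/r = \operatorname{rank}_{\mathbb{Z}}\mathbb{Z}[\eta]^P$, this spanning set is automatically linearly independent, hence an integral basis.

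\textbf{Main obstacle.} The most delicate technical point will be the surjectivity of $\operatorname{Tr}_P$, which relies on tame ramification together with the classical identification of the ring of integers of a cyclotomic subfield with $\mathbb{Z}[\eta]^P$; the remaining combinatorial matching of each $\alpha_i$ (for $1 \le i \le \phi(p^m)$) to an element or an explicit $\mathbb{Z}$-combination within $T_m$ is then a direct, if bookkeeping-heavy, case analysis driven by the key vanishing relation for $\omega$.
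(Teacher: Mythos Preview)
Your proposal is correct and follows essentially the same inductive and combinatorial skeleton as the paper: reduce to $s=0$, induct on the level, and use the relation $\sum_{j=0}^{p-1}\eta^{s+jp^{m-1}}=0$ to absorb the ``missing'' residue $j=p-1$ into the others. The case analysis you describe matches the paper's exactly.

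There is one genuine difference in how you justify that the orbit sums span $\mathbb{Z}[\eta]^P$. You invoke surjectivity of $\operatorname{Tr}_P:\mathbb{Z}[\eta]\to\mathbb{Z}[\eta]^P$ via tame ramification. The paper instead proves a slightly stronger intermediate statement by purely elementary means: it shows that $\{\gamma(\zeta^t):\gamma\in P,\ t\in T_n\}$ is itself an integral basis of $\mathbb{Z}[\zeta]$ (a ``$P$-adapted'' basis of the full ring). From this, the spanning of $\mathbb{Z}[\eta]^P$ by orbit sums is immediate---any $P$-fixed integer, written in this basis, must have coefficients constant along $P$-orbits---and no ramification theory is needed. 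Your route is perfectly valid and perhaps more conceptual; the paper's route is more self-contained and, as a bonus, yields the explicit $P$-adapted basis of $\mathbb{Z}[\zeta]$, which it later exploits in the proof of Lemma~\ref{semidirect}.
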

\begin{proof}
First we reduce to the case $s=0$. Thus, assume $s>0$. Since $p>2$, $\mathcal{G}$ is cyclic and $\mathcal{N}$ is uniquely determined by $r$ and $s$. Let $F$ be the fixed field of $\mathcal{N}$. By Galois theory, it follows that $[\mathbb{Q}(\zeta):F]=\lvert\mathcal{N}\rvert$ and $F\subseteq\mathbb{Q}(\zeta^{p^s})$. Since $\pcore_p(\mathcal{N})$ acts trivially on $\mathbb{Q}(\zeta^{p^s})$, $F$ is just the fixed field of $\pcore_{p'}(\mathcal{N})$ in $\mathbb{Q}(\zeta^{p^s})$. We may therefore assume that $s=0$ in the following.

The group $\mathcal{N}$ acts on $\{1\le j\le p^i:(j,p)=1\}$ via $^\gamma j:=\gamma j\pmod{p^i}$. Hence, the sets $S_i$ and $T_i$ are well-defined.
Since $\mathcal{N}$ is a $p'$-group, the canonical map $\mathcal{N}\to(\mathbb{Z}/p^i\mathbb{Z})^\times$ is injective for $i\ge 1$. Therefore, $\mathcal{N}$ acts always semiregularly. 
Let $m:=p^{n-1}(p-1)$. 
First we show that $V:=\{\gamma(\zeta^t):\gamma\in\mathcal{N},t\in T_n\}$ is an integral basis of $\mathbb{Q}(\zeta)$. 
We argue by induction on $n$.
It is well-known that $\zeta,\zeta^2,\ldots,\zeta^m$ is an integral basis of $\mathbb{Q}(\zeta)$.
Hence, there is nothing to do for $n=1$. Now let $n\ge 2$. Then by induction we have $|V|=p^{n-2}(p-1)+p^{n-2}(p-1)^2=m$. Thus, it suffices to show that each $\zeta^i$ 
can be written as an integral linear combination of $V$. If $p\mid i$, then induction shows that $\zeta^i$ can be written in terms of $\gamma(\zeta^t)$ with $t\in pT_{n-1}$. Thus, we may assume that $p\nmid i$. 
Choose $j$ such that $1\le i-jp^{n-1}\le p^{n-1}$. By the definition of $S_{n-1}$, there exist $\gamma\in\mathcal{N}$ and $k\in\mathbb{Z}$ such that $\gamma i-kp^{n-1}\in S_{n-1}$. If $0\le k\le p-2$, then $\gamma i\in T_n$ and the claim follows. Now let $k=p-1$. Then $\gamma i+lp^{n-1}\in T_n$ for all $l\in\{1,\ldots,p-1\}$.
Since $1=-\sum_{l=1}^{p-1}{\zeta^{lp^{n-1}}}$, we have $\zeta^{\gamma i}=-\sum_{l=1}^{p-1}{\zeta^{\gamma i+lp^{n-1}}}$. Therefore, it follows that $V$ is in fact an integral basis for $\mathbb{Q}(\zeta)$. 

The elements $\tr(\zeta^t):=\sum_{\gamma\in\mathcal{N}}{\gamma(\zeta^t)}$ with $t\in T_n$ certainly lie in the ring of integers of the fixed field $F$ of $\mathcal{N}$. Conversely, let $x\in F$ be an algebraic integer. Then by the arguments above, we can write $x=\sum_{v\in V}{\alpha_vv}$ with $\alpha_v\in\mathbb{Z}$. It is now obvious that $x=\sum_{t\in T_n}{\alpha_{\zeta^t}\tr(\zeta^t)}$. Consequently, $\widetilde{T}_n$ is a generating set for the ring of integers of $F$. By Galois theory, $|F:\mathbb{Q}|=p^{n-1}(p-1)/|\mathcal{N}|=|T_n|$. Hence, the elements really form an integral basis and we are done.
\end{proof}

\begin{Lemma}\label{semidirect}
Let $p^n$ be an odd prime power, and let $\gamma\in(\mathbb{Z}/p^n\mathbb{Z})^\times$ of order $p^sr$ with $r\mid p-1$ and $s\ge 0$.
Let $B$ be the principal $p$-block of 
\[G:=\langle u,x\mid u^{p^n}=x^{p^sr}=1,\ xux^{-1}=u^\gamma\rangle,\]
and let $b_u$ be the principal block of $\C_G(u)=\langle u\rangle$. Then $\IBr(b_u)=\{\phi\}$ and the generalized decomposition numbers $(d^u_{\chi\phi}:\chi\in\Irr(B))$ can be written in the form $A\widetilde{T}_n$ where $A$ is an integral matrix and $\widetilde{T}_n$ is the basis from \autoref{intbas}. Let $M=(m_{ij}):=A^\textup{T}A\in\mathbb{Z}^{t\times t}$ with $t=p^{n-s-1}(p-1)/r$. Then for the quadratic form $q(x)=\sum_{1\le i\le j\le t}{q_{ij}x_ix_j}$ corresponding to the Dynkin diagram of type $A_t$ we have
\[\sum_{1\le i\le j\le t}{q_{ij}m_{ij}}=k_0(B).\]
\end{Lemma}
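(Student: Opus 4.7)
The plan is to compute the generalized decomposition numbers $d^u_{\chi,\phi}=\chi(u)$ explicitly via Clifford theory and then verify the quadratic-form identity character-by-character.

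First I reduce to the case $s=0$. The Sylow-$p$ subgroup $\langle x^r\rangle$ of $\langle x\rangle$ satisfies $\gamma^r\equiv 1\pmod{p^{n-s}}$, so it fixes $\mathbb{Q}(\zeta^{p^s})$; consequently the fixed field $F$ of $\mathcal{N}$ in $\mathbb{Q}(\zeta)$ coincides with the fixed field of $\pcore_{p'}(\mathcal{N})$ in $\mathbb{Q}(\zeta^{p^s})\cong\mathbb{Q}(\zeta_{p^{n-s}})$, and $\widetilde{T}_n$ is exactly the basis produced by \autoref{intbas} for this smaller cyclotomic field. A direct orbit-sum computation using $1+\zeta^{p^{n-1}}+\cdots+\zeta^{(p-1)p^{n-1}}=0$ shows $\chi(u)=0$ for every positive-height $\chi$. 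Each height-$0$ character of $G$ arises as one of $p^s$ Clifford extensions of a character of the smaller group $G':=\langle u^{p^s}\rangle\rtimes\pcore_{p'}(\mathcal{N})$, so $M$ equals $p^s M'$ and $k_0(B)=p^s k_0(B')$, reducing the claim to the case $s=0$.

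For $s=0$, Clifford theory presents $\operatorname{Irr}(B)$ as $r$ linear characters with $\chi(u)=1$, together with $(p^n-1)/r$ induced characters of degree $r$ with $\chi(u)=\sum_{\gamma\in\mathcal{N}}\gamma(\zeta^j)$ for orbit representatives $j\in\{1,\ldots,p^n-1\}$; all have height $0$ and $k_0(B)=r+(p^n-1)/r$. Order $T_n$ as in the recursion of \autoref{intbas}, so that for each $\sigma\in S_{n-1}$ the run $\sigma,\sigma+p^{n-1},\ldots,\sigma+(p-2)p^{n-1}$ occupies $p-1$ consecutive positions and the block $pT_{n-1}$ is appended at the end in its inductively determined order. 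With this ordering I distinguish three cases for the coordinate vector $a_\chi$ of $\chi(u)$ in $\widetilde{T}_n$: (a) if $j\in T_n$ then $a_\chi=e_j$ and $q_{A_t}(e_j)=1$; (b) if $\chi(u)=1$, the expansion $1=-\sum_{k=1}^{p-1}\zeta^{kp^{n-1}}$ combined with $\pcore_{p'}(\mathcal{N})$-orbit collapsing deposits $-1$'s on the final $(p-1)/r$ consecutive positions of $T_n$, and $q_{A_t}(a_\chi)=(p-1)/r-((p-1)/r-1)=1$; (c) if no representative of the orbit of $j$ lies in $T_n$, choose $j=(p-1)p^{n-1}+k$ with $1\le k<p^{n-1}$ and apply $\zeta^{(p-1)p^{n-1}+k}=-\sum_{i=0}^{p-2}\zeta^{k+ip^{n-1}}$ (iterated on the $p$-valuation of $k$ when necessary), which places the $-1$-coefficients of $a_\chi$ on a single ``new element'' block of $T_n$ and yields $q_{A_t}(a_\chi)=1$. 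Summing over $\operatorname{Irr}(B)$ gives $\sum_{1\le i\le j\le t}q_{ij}m_{ij}=\sum_\chi q_{A_t}(a_\chi)=k_0(B)$.

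The main obstacle is the inductive verification in case~(c): one must show that iterating $\Phi_{p^n}(\zeta)=0$ at successively deeper $p$-valuation levels never disperses the $-1$-coefficients of $a_\chi$ outside a single consecutive run in the chosen ordering of $T_n$. This is precisely the property that the interleaved recursion of \autoref{intbas} is designed to guarantee, and it is also the aspect where the present streamlined argument replaces the more involved calculations in \cite[Section~5.2]{habil}.
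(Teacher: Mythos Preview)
Your argument follows the same route as the paper's: both express $d^u_{\chi\phi}$ in the basis $\widetilde T_n$, observe that each row $a_\chi$ of $A$ is either a unit vector or has $-1$'s on a run of consecutive coordinates, and evaluate the $A_t$-form. The paper packages this by computing $A^{\mathrm T}A$ explicitly as a block-diagonal matrix (one block $(p^sr+p^s\delta_{ij})$ of size $(p-1)/r$ for the order-$p$ basis elements, and $(p^{n-s-1}-1)/r$ blocks $p^s(1+\delta_{ij})$ of size $p-1$) and then sums the $A_t$-form block by block. Your observation that $q_{A_t}(a_\chi)=1$ for every height-$0$ character is the same computation read row-by-row rather than block-by-block, and is arguably cleaner. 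Your reduction to $s=0$ is also a legitimate shortcut that the paper handles instead by carrying the multiplicity $p^s$ through the calculation.

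One point to tighten in case~(c): no iteration is ever required. If the $\mathcal N$-orbit of $j$ misses $T_n$ and $\zeta^j$ has order $p^\ell$ (any $1<\ell\le n$), one can always choose the representative $j=p^{n-\ell}s'+(p-1)p^{n-1}$ with $s'\in S_{\ell-1}$; a \emph{single} application of the relation $\sum_{i=0}^{p-1}\zeta^{j+ip^{n-1}}=0$ then yields $\tr(\zeta^j)=-\sum_{i=0}^{p-2}\tr(\zeta^{p^{n-\ell}(s'+ip^{\ell-1})})$, placing the $-1$'s precisely on the consecutive run for $s'$ inside $p^{n-\ell}T_\ell\subseteq T_n$. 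The proof of \autoref{intbas} already establishes this for $\ell=n$, and the recursive definition of $T_n$ propagates it to smaller $\ell$. Your ``iterated on the $p$-valuation'' clause is therefore unnecessary, and the last paragraph of your proposal overstates the remaining difficulty.
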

\begin{proof}
Observe that $B$ is the only block of $G$.
It is obvious that $\C_G(u)=\langle u\rangle$ and $l(b_u)=1$. Note that $\phi$ is the trivial Brauer character of $\langle u\rangle$. The irreducible characters of $B$ can be obtained as in the proof of \autoref{mainfusion}. 
If $\chi\in\Irr(G)$ is an inflation from $\langle x\rangle$, then $d^u_{\chi\phi}=1$. Now assume that $\chi$ lies over a non-trivial character $\lambda\in\Irr(\langle u\rangle)$. Then we have $\chi(u)=\lambda_1(u)+\ldots+\lambda_t(u)$ where $\{\lambda_1,\ldots,\lambda_t\}$ is the orbit of $\lambda$ under $\langle x\rangle$. If $p\mid t$, then $\chi$ has positive height and $d^u_{\chi\phi}=0$ by \autoref{intbas}. Otherwise, $d^u_{\chi\phi}$ has the form \[\tr(\zeta_i):=\sum_{j=1}^{r}{\zeta_i^{\gamma^j}}\]
where $\zeta_1,\ldots,\zeta_m$ is a set of representatives of the non-trivial $p^{n-s}$-th roots of unity under the action of $\gamma$ ($m=(p^{n-s}-1)/r$).
Since every $\lambda$ has $p^s$ extensions to $\langle u,x^r\rangle$, every $\zeta_i$ appears $p^s$ times in the generalized decomposition matrix. 

Now we need to express the numbers $d^u_{\chi\phi}$ in terms of $\widetilde{T}_n$ from \autoref{intbas}. With the notation from \autoref{intbas} we have $1=-\sum_{i=1}^{p-1}{\zeta^{p^{n-1}i}}$. Hence the first rows of $A$ are given by $(-1,\ldots,-1,0,\ldots,0)$ with $(p-1)/r$ entries $-1$. If $\zeta_i$ above has order $p$, then $\tr(\zeta_i)$ is just an element of $\widetilde{T}_n$. Now suppose that $\zeta_i$ has order greater than $p$. Then it can happen that $\tr(\zeta_i)$ does not belong to $\widetilde{T}_n$. But in this case we have $\zeta_i=-\sum_{j=1}^{p-1}{\zeta_i\zeta^{jp^{n-1}}}$ and by construction all the elements $\tr(\zeta_i\zeta^{jp^{n-1}})$ lie in $\widetilde{T}_n$. Moreover, for $i'\ne i$, the sets $\{\tr(\zeta_i\zeta^{jp^{n-1}}):j=1,\ldots,p-1\}$ and $\{\tr(\zeta_{i'}\zeta^{jp^{n-1}}):j=1,\ldots,p-1\}$ are disjoint. 
Consequently, we may order these basis elements in such way that the corresponding rows of $A$ have the form
\[\begin{pmatrix}
&1\\
\\
&\multicolumn{10}{c}{\diagdots[-13]{19em}{.4em}}\\
\\
0&&&&&&&&&&1\\[1mm]
&-1&\cdots&-1\\
&&&&-1&\cdots&-1&&&&&0\\
&&&&&&&\ddots\\
&&&&&&&&-1&\cdots&-1
\end{pmatrix}.\]
Each of these rows appear $p^s$ times.
Now we put all the ingredients together and it follows that $A^\text{T}A$ is a block diagonal matrix. The first block is $(p^sr+p^s\delta_{ij})_{i,j=1}^{(p-1)/r}$ and corresponds to the basis elements of order $p$. All other blocks are given by $p^s(1+\delta_{ij})_{i,j=1}^{p-1}$. The number of these blocks is $(p^{n-s-1}-1)/r$. Hence, we have
\begin{align*}
\sum_{1\le i\le j\le t}{q_{ij}m_{ij}}&=\Bigl(\frac{p-1}{r}\bigl(p^sr+p^s\bigr)-\bigl(\frac{p-1}{r}-1\bigr)p^sr\Bigr)+\frac{p^{n-1}-p^s}{r}\bigl(2(p-1)-(p-2)\bigr)\\
&=\frac{p-1}{r}p^s+p^sr+\frac{p^n-p^{s+1}}{r}=\frac{p^n+p^s(r^2-1)}{r}=k_0(B)
\end{align*}
(cf. proof of \autoref{mainfusion}).
\end{proof}

\begin{Theorem}\label{stablecase}
Let $B$ be a $p$-block of a finite group $G$.
Let $(u,b_u)$ be a $B$-subsection such that $b_u$ dominates a block $\overline{b_u}$ of $\C_G(u)/\langle u\rangle$.
Suppose that all irreducible Brauer characters of $b_u$ are stable under $\mathcal{N}:=\N_G(\langle u\rangle,b_u)/\C_G(u)$. Let $l:=l(b_u)$, and let $\overline{C_u}$ be the Cartan matrix of $\overline{b_u}$ up to basic sets. 
Then for every positive definite, integral quadratic form
$q(x_1,\ldots,x_l)=\sum_{1\le i\le j\le l}{q_{ij}x_ix_j}$
we have
\[k_0(B)\le k_0\bigl(\langle u\rangle\rtimes\mathcal{N}\bigr)\sum_{1\le i\le j\le l}{q_{ij}c_{ij}}.\]
If $(u,b_u)$ is major, then
\[k(B)\le k\bigl(\langle u\rangle\rtimes\mathcal{N}\bigr)\sum_{1\le i\le j\le l}{q_{ij}c_{ij}}.\]
\end{Theorem}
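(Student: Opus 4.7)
The plan is to follow the template of \autoref{general}, using stability to reduce the Gram matrix of the problem to a Kronecker product, and then to combine \autoref{lemtensor} with \autoref{semidirect}. Throughout I assume $p$ is odd. For the setup, stability of every $\sigma\in\IBr(b_u)$ under $\mathcal{N}$, together with Brauer's Galois equivariance $\gamma(d^u_{\chi\sigma})=d^{u^\gamma}_{\chi,{}^\gamma\sigma}$, forces each $d^u_{\chi\sigma}$ to lie in the fixed field $F$ of $\mathcal{N}$ in $\mathbb{Q}(\zeta_{p^n})$. Using the integral basis $\widetilde{T}_n$ of $F$ from \autoref{intbas}, I would write each column as $d^u_\sigma=A_\sigma w$ with $A_\sigma\in\mathbb{Z}^{k(B)\times t}$, $t=|\widetilde{T}_n|$, and $w=(\widetilde{t})_{\widetilde{t}\in\widetilde{T}_n}$; concatenating yields $A=(A_\sigma)_{\sigma\in\IBr(b_u)}\in\mathbb{Z}^{k(B)\times lt}$, whose rows indexed by height-$0$ characters are non-zero by \cite[Proposition~1.36]{habil} (all rows in the major case).

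The key step is the Kronecker factorisation $A^\textup{T}A=\overline{C_u}\otimes M_0$, where $M_0$ is the Gram matrix computed in \autoref{semidirect} for $\langle u\rangle\rtimes\mathcal{N}$. I would derive it by retracing the orthogonality computation of \autoref{general} under stability: since ${}^\gamma\sigma=\sigma$ for every $\gamma\in\mathcal{N}$, the cross-subsection sum $\sum_\chi d^{u^\gamma}_{\chi\sigma}\overline{d^{u^\delta}_{\chi\tau}}$ equals $p^nc_{\sigma\tau}$ if $\gamma^{-1}\delta\in\mathcal{N}$ and $0$ otherwise. Substituting $d^{u^\gamma}_{\chi\sigma}=A_\sigma\gamma(w)$ and letting $\gamma,\delta$ range over coset representatives of $\mathcal{N}$ in $\Gal(\mathbb{Q}(\zeta_{p^n})/\mathbb{Q})$ gives an invertible linear system for $A_\sigma^\textup{T}A_\tau$ whose right-hand side is $c_{\sigma\tau}$ times a datum depending only on $p^n$ and $\mathcal{N}$. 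Running the same system in the $l(b_u)=1$ setting of \autoref{semidirect} (where $c_{\sigma\tau}=1$) pins that datum down as $M_0$, and uniqueness forces $A_\sigma^\textup{T}A_\tau=c_{\sigma\tau}M_0$.

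With this in hand, let $q$ be the given positive definite integral form and $q_A$ the type-$A_t$ form of \autoref{lemtensor}. Since $q(x)\ge 1$ on nonzero integral $x$, \autoref{lemtensor} gives $(q\otimes q_A)(y)\ge 1$ on nonzero integral $y$, so
\[\#\{\text{non-zero rows of }A\}\le\sum_{\chi}(q\otimes q_A)(\text{row}_\chi A)=\operatorname{tr}\bigl((Q\otimes G_A)\,A^\textup{T}A\bigr),\]
where $Q,G_A$ are the symmetric Gram matrices of $q,q_A$. Substituting $A^\textup{T}A=\overline{C_u}\otimes M_0$ and using $\operatorname{tr}((Q\otimes G_A)(\overline{C_u}\otimes M_0))=\operatorname{tr}(Q\overline{C_u})\operatorname{tr}(G_AM_0)$ reduces the right-hand side to $\bigl(\sum_{1\le i\le j\le l}q_{ij}c_{ij}\bigr)\,k_0(\langle u\rangle\rtimes\mathcal{N})$, the second factor by \autoref{semidirect}. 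This establishes the bound on $k_0(B)$. In the major case every row of $A$ is non-zero, so the left side becomes $k(B)$; the proof of \autoref{mainfusion} shows $s=0$, so $|\mathcal{N}|\mid p-1$ is coprime to $p$, and Itô applied to the abelian normal Sylow $p$-subgroup $\langle u\rangle$ of $\langle u\rangle\rtimes\mathcal{N}$ forces $k=k_0$ for this group, yielding the desired bound.

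The main obstacle is the Kronecker factorisation: identifying the Gram datum imposed on $A_\sigma^\textup{T}A_\tau$ by the orthogonality relations precisely with the $M_0$ of \autoref{semidirect}. Without stability, the off-diagonal blocks would mix columns via $\mathcal{N}$-orbit sums on $\IBr(b_u)$ (compare the weights $w_{ij}^\tau(\gamma)$ in \autoref{general}), destroying the Kronecker structure and the clean evaluation via \autoref{semidirect}. The case $p=2$ lies outside the hypotheses of \autoref{intbas} and \autoref{semidirect} and would be handled separately, mirroring the $p=2$ portion of the proof of \autoref{mainfusion}: reduce most configurations to \cite[Theorem~5.2]{habil} and the remaining one to \autoref{outer}.
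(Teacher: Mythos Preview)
Your proposal is correct and follows essentially the same approach as the paper: handle $p=2$ separately via \cite[Theorem~5.2]{habil} and \autoref{outer}, and for odd $p$ exploit stability to obtain the Kronecker factorisation $\overline{C_u}\otimes M_0$, then combine \autoref{lemtensor} with the evaluation in \autoref{semidirect}. The only cosmetic difference is that the paper first invokes \autoref{general} in the full cyclotomic basis (so that stability collapses $A_{\sigma\tau}$ to $c_{\sigma\tau}T$) and then passes to the basis $\widetilde{T}_n$, whereas you work directly in the fixed field from the outset; your explicit justification that $k_0=k$ for $\langle u\rangle\rtimes\mathcal{N}$ in the major case (via $s=0$ and It\^o) makes a step the paper leaves implicit.
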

\begin{proof}
First of all, note that we only need to know $\overline{C_u}$ up to basic sets, because changing the basic set has the same effect as changing the quadratic from $q$ accordingly (see \cite[p. 83]{KuelshammerWada}). 
Now we discuss the special case $p=2$. Similarly as in the proof of \autoref{mainfusion}, we need to distinguish two cases. In the first case $\mathcal{N}$ contains an element $\gamma\equiv -5^n\pmod{|\langle u\rangle|}$. Then \cite[Theorem~5.2]{habil} implies
\[k_0(B)\le 2|\mathcal{N}|\sum_{1\le i\le j\le l}{q_{ij}c_{ij}}\]
and the claim follows as in \autoref{mainfusion}. In the second case there is no such element $\gamma$. Here, $k_0(\langle u\rangle\rtimes\mathcal{N})=|\langle u\rangle|$ and the claim follows directly from \autoref{outer}.

In the following we will assume $p>2$. Let $|\langle u\rangle|=p^n$ and $m:=p^{n-1}(p-1)$. In the definition of $a_{ij}$ in \autoref{general} we may take $\gamma=1$. It follows that there is a matrix $T=(t_{ij})_{i,j=1}^m$ such that $A_{\sigma\tau}=c_{\sigma\tau}T$ for all $\sigma,\tau\in\IBr(b_u)$. 
This gives $M=(m_{ij})=\overline{C_u}\otimes T$. It is easy to see that $T$ is exactly the matrix we would get from \autoref{general} applied to the group $\langle u\rangle\rtimes\mathcal{N}$. By using the integral basis from \autoref{intbas}, we can replace $T$ by a matrix of size $\frac{m}{|\mathcal{N}|}\times\frac{m}{|\mathcal{N}|}$ (constructed in the proof of \autoref{semidirect}). We still denote this smaller matrix by $T$.
If $q_A(x)=\sum_{1\le i\le j\le \frac{m}{|\mathcal{N}|}}{f_{ij}x_ix_j}$ is the quadratic form corresponding to the Dynkin diagram of type $A_{m/|\mathcal{N}|}$, then \autoref{semidirect} gives
\[\sum_{1\le i\le j\le \frac{m}{|\mathcal{N}|}}{f_{ij}t_{ij}}=k_0\bigl(\langle u\rangle\rtimes\mathcal{N}\bigr).\]

Now let $U^\text{T}U=M$ for a matrix $U=(u_{ij})\in\mathbb{Z}^{k\times \frac{ml}{|\mathcal{N}|}}$ without vanishing rows. We need to bound $k$. 
By \autoref{lemtensor}, $q^*:=q\otimes q_A$ satisfies $q^*(x)\ge 1$ for every $x\in\mathbb{Z}^{\frac{ml}{|\mathcal{N}|}}\setminus\{0\}$. Recall that the Gram matrix of $q^*$ is given by
\[(z_{ij})=\frac{1}{2}\begin{pmatrix}
2q_{11}&q_{12}&\cdots&q_{1l}\\
q_{21}&2q_{22}&\ddots&\vdots\\
\vdots&\ddots&\ddots&q_{l-1,l}\\
q_{l1}&\cdots& q_{l,l-1}&2q_{ll}
\end{pmatrix}\otimes\frac{1}{2}\begin{pmatrix}
2&-1&0&\dots&0\\
-1&\ddots&\ddots&\ddots&\vdots\\
0&\ddots&\ddots&\ddots&0\\
\vdots&\ddots&\ddots&\ddots&-1\\
0&\dots&0&-1&2
\end{pmatrix}.\]
This implies
\[k\le\sum_{i=1}^k{q^*(u_{i1},\ldots,u_{i,\frac{ml}{|\mathcal{N}|}})}=\sum_{i=1}^k\sum_{1\le r,s\le \frac{ml}{|\mathcal{N}|}}{z_{rs}u_{ir}u_{is}}=\sum_{1\le r,s\le \frac{ml}{|\mathcal{N}|}}{z_{rs}m_{rs}}=k_0\bigl(\langle u\rangle\rtimes\mathcal{N}\bigr)\sum_{1\le i\le j\le l}{q_{ij}c_{ij}}.\qedhere\]
\end{proof}

The hypothesis of \autoref{stablecase} is met surprisingly often. This can be explained by the following argument. By \cite[Corollary~9.21]{Navarro}, 
$B_u:=b_u^{\N_G(\langle u\rangle,b_u)}$ is the only block of $\N_G(\langle u\rangle,b_u)$ covering $b_u$. 
Clifford theory gives strong relations between $k(b_u)$ and $l(b_u)$ on the one hand and $k(B_u)$ and $l(B_u)$ on the other hand. In particular, all characters in $\IBr(b_u)$ are $\N_G(\langle u\rangle,b_u)$-stable if and only if $l(B_u)=l(b_u)r$ where $r$ is the $p'$-part of $|\mathcal{N}|$ (see \cite[Theorems~8.11, 8.12 and Corollary~8.20]{Navarro}). In general, the numbers $l(b_u)$, $l(B_u)$ and $|\mathcal{N}|$ do not determine the orbit lengths on $\IBr(b_u)$ uniquely. For example, $l(b_u)=l(B_u)=18$ and $|\mathcal{N}|=24$ allow orbit lengths $3,3,12$, but also $2,8,8$.

In the next section we will discuss a special case where not all Brauer characters are stable.

\section{Abelian defect groups of rank $2$}
Suppose that the defect group $D$ of the block $B$ is metacyclic. If $p=2$, then Alperin's Weight Conjecture holds for $B$ (see \cite[Corollary~8.2]{habil}). Thus, let $p>2$. Then by Watanabe~\cite{WatanabeAWC} (cf. \cite[Theorem~8.8]{habil}), Alperin's Weight Conjecture also holds provided $D$ is non-abelian (in addition). Hence, it is of interest to study the case $D\cong Z_{p^n}\times Z_{p^m}$ more closely. Here even in the smallest case $D\cong Z_3^2$ Alperin's Weight Conjecture is open (cf. Kiyota~\cite{Kiyota} and Watanabe~\cite{WatanabeSD16}). 
Therefore, any new information is valuable. In this section we will prove the following.

\begin{Theorem}\label{2rank}
Let $B$ be a $p$-block of a finite group $G$ with defect group $D\cong Z_{p^n}\times Z_{p^m}$ and inertial quotient $I(B)=I\times J$ such that $D\rtimes I(B)\cong (Z_{p^n}\rtimes I)\times(Z_{p^m}\rtimes J)$. Let $b$ be the Brauer correspondent of $B$ in $\N_G(D)$. Then $k(B)\le k(b)$ and $l(B)\le l(b)\le|I(B)|$.
\end{Theorem}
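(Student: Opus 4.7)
Write $D=\langle u\rangle\times\langle v\rangle$ with $|u|=p^n$ and $|v|=p^m$, and consider the $B$-subsection $(u,b_u)$, which is major since $u\in\Z(D)$. The splitting hypothesis $D\rtimes I(B)\cong(Z_{p^n}\rtimes I)\times(Z_{p^m}\rtimes J)$ means that $I$ centralizes $\langle v\rangle$ and $J$ centralizes $\langle u\rangle$, so the inertial quotient of $b_u$ inside $\C_G(u)$ is $J$, and the dominated block $\overline{b_u}$ of $\C_G(u)/\langle u\rangle$ has cyclic defect group $D/\langle u\rangle\cong Z_{p^m}$ with inertial quotient $J$. Transporting $\N_G(D,b_D)$ through the Brauer correspondence and projecting $I(B)=I\times J$ onto the first factor identifies $\mathcal{N}:=\N_G(\langle u\rangle,b_u)/\C_G(u)$ with $I$.

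The crucial input for applying \autoref{stablecase} is that every irreducible Brauer character of $b_u$ is $\mathcal{N}$-stable. I would derive this from the product structure: since $I$ and $J$ commute inside $I(B)$ and $I$ acts trivially on the defect group $\langle v\rangle$ of $\overline{b_u}$, the induced action of $\mathcal{N}\cong I$ on the Brauer tree of the cyclic-defect block $\overline{b_u}$ is trivial and therefore fixes every simple $\overline{b_u}$-module. The Cartan matrix $\overline{C_u}$ has, up to basic sets, the standard cyclic-defect shape, and a classical Brauer tree computation yields a positive definite integral quadratic form $q$ in $l(\overline{b_u})=|J|$ variables with
\[\sum_{1\le i\le j\le |J|}q_{ij}c_{ij}=|J|+\frac{p^m-1}{|J|}=k\bigl(\langle v\rangle\rtimes J\bigr).\]
Feeding this into the major case of \autoref{stablecase} gives
\[k(B)\le k\bigl(\langle u\rangle\rtimes\mathcal{N}\bigr)\sum_{1\le i\le j\le |J|}q_{ij}c_{ij}=k\bigl(\langle u\rangle\rtimes I\bigr)\cdot k\bigl(\langle v\rangle\rtimes J\bigr).\]

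To identify the right-hand side with $k(b)$, I use that $D$ is abelian and normal in $\N_G(D)$ with split inertial quotient: standard block theory (Külshammer/Puig) then shows that $b$ is Morita equivalent to the principal block of $D\rtimes I(B)$, since the relevant second cohomology class vanishes in the split setting. Hence
\[k(b)=k\bigl(D\rtimes I(B)\bigr)=k\bigl((\langle u\rangle\rtimes I)\times(\langle v\rangle\rtimes J)\bigr)=k\bigl(\langle u\rangle\rtimes I\bigr)\cdot k\bigl(\langle v\rangle\rtimes J\bigr),\]
yielding $k(B)\le k(b)$. The bound $l(b)\le|I(B)|$ is immediate from the same Morita equivalence: $l(b)=k(I(B))=|I|\cdot|J|=|I(B)|$, using that $I(B)$ is an abelian $p'$-group.

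The main obstacle is the inequality $l(B)\le l(b)$, which is not a direct output of the refinements of \autoref{stablecase}. I would attempt it by a parallel Clifford-theoretic reduction exploiting the split structure: the two independent cyclic-defect subproblems obtained by collapsing $\langle u\rangle$ and $\langle v\rangle$ separately each satisfy $l(B_{\textup{cyc}})=l(b_{\textup{cyc}})$ by Dade's theorem, and these bounds should combine through the product decomposition to give $l(B)\le|I|\cdot|J|=l(b)$. Should this be insufficient, one may instead apply \autoref{general} to a second subsection (for instance $(v,b_v)$ or subsections centred at subgroups of $D$ of order $p$) and combine the resulting constraints on the generalized decomposition matrix to pin down $l(B)$. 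Establishing $\mathcal{N}$-stability of $\IBr(b_u)$ rigorously and executing the $l$-bound are the technically most delicate parts of the argument.
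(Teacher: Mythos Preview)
Your argument has two genuine gaps, both rooted in the same oversight: the Brauer correspondent $b$ need not be Morita equivalent to the \emph{untwisted} group algebra of $D\rtimes I(B)$. K\"ulshammer's theorem only gives a Morita equivalence with a twisted algebra $\mathcal{O}_\gamma[D\rtimes I(B)]$, and since $\cohom^2(I\times J,\mathbb{C}^\times)\cong Z_{(|I|,|J|)}$ the twist can be non-trivial. Thus $l(b)=|I||J|/d^2$ for some divisor $d$ of $(|I|,|J|)$, and your assertion $l(b)=|I(B)|$ is false in general; likewise your identification $k(b)=k(\langle u\rangle\rtimes I)\cdot k(\langle v\rangle\rtimes J)$ overshoots the actual $k(b)$ whenever $d>1$. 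The second gap is the claimed $\mathcal{N}$-stability of $\IBr(b_u)$. That $I$ acts trivially on the defect group $\langle v\rangle$ of $\overline{b_u}$ is a statement about the local picture at $D$; the action of $\N_G(\langle u\rangle,b_u)$ on $\IBr(b_u)$ is a global feature of $\C_G(u)$ and is not determined by it. The paper does not establish stability and indeed cannot: it only proves (\autoref{halftrans}) that the action is $\tfrac{1}{2}$-transitive with some orbit length $\alpha\ge 1$, and \autoref{metacyclic} explicitly exhibits model blocks with $\alpha>1$. Consequently \autoref{stablecase} is not applicable, and Lemmas~\ref{metacyclic} and~\ref{finallem} are written precisely to handle the non-stable case.

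The paper's actual strategy is rather different from yours. First, Brauer's formula over all subsections gives $k(B)-l(B)=k(b)-l(b)$ exactly (equation~\eqref{diff}), so the whole theorem reduces to $l(B)\le l(b)$. Using \autoref{lembasic} to put $\overline{C_u}$ in the form $(t+\delta_{ij})$ and comparing with the model block of \autoref{metacyclic} (with its parameter set to $\alpha$), \autoref{finallem} yields $k(B)\le\bigl(k(b)-l(b)\bigr)+|I||J|/\alpha^2$, hence $l(B)\le|I||J|/\alpha^2$. The link between $\alpha$ and $d$ comes from induction on $|G|$: applying the theorem to $B_u:=b_u^{\N_G(\langle u\rangle,b_u)}$ gives $l(B_u)\le l(b)$, while Clifford theory gives $l(B_u)=|I||J|/\alpha^2$; together these force $\alpha\ge d$, whence $l(B)\le|I||J|/\alpha^2\le|I||J|/d^2=l(b)$. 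Your proposed route via \autoref{stablecase} recovers only the special case $d=\alpha=1$.
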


Before we begin with the proof we make a few remarks. Alperin's Weight Conjecture for blocks with abelian defect groups asserts that $l(B)=l(b)$ (with the notation from \autoref{2rank}). Hence, $l(B)\le l(b)$ is a sharp bound and so is $k(B)\le k(b)$ by standard arguments (see proof of \autoref{2rank}). Our result is trivial if $p=2$, because $B$ is nilpotent in this case. If $p=3$, then we have $I(B)\le Z_2^2$. Hence, results by Usami and Puig~\cite{Usami23I,UsamiZ2Z2} already imply Alperin's Weight Conjecture. For $p\ge 5$, \autoref{2rank} is something new (at least to the author's knowledge). 

More generally, let $B$ be a block with abelian defect group $D$ of rank $2$ such that $I(B)$ acts decomposably on $D$. 
Then $I(B)\le Z_{p-1}^2$. Apart from \autoref{2rank}, it may happen that $I(B)$ acts semiregularly on $D\setminus\{1\}$.
Then $I(B)$ is cyclic and \cite[Theorem~5]{SambaleC3} implies the optimal bound $l(B)\le|I(B)|=l(b)$. 
Unfortunately, there are other “mixed” cases where we cannot say much at the moment. For example, let $D\cong Z_7^2$, and let $I(B)\cong Z_6$ act on the first factor faithfully and as an inversion on the second factor. Then the ideas of the present paper show $k(B)\le 25$, but actually $k(B)=22$. Moreover, in general it seems vastly more difficult to prove the converse inequality $l(B)\ge l(b)$ (this is a main obstacle for $D\cong Z_3^2$).

Now we begin with the proof of \autoref{2rank}. The key is to show that for blocks with cyclic defect groups, the action on the Brauer characters is not arbitrary.
We say that an action is \emph{$\frac{1}{2}$-transitive}, if all its orbits have the same length. Note that we also consider the trivial action as $\frac{1}{2}$-transitive.

\begin{Proposition}\label{halftrans}
Let $G$ be a finite group with $N\unlhd G$ such that $G/N$ is cyclic. Let $B$ be a $G$-stable block of $N$ with cyclic defect group. Then the action of $G$ on $\IBr(B)$ is $\frac{1}{2}$-transitive. 
In particular, the action is trivial if $B$ is the principal block.
\end{Proposition}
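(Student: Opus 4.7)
The plan is to use Brauer tree theory for blocks with cyclic defect groups. Write $e := l(B)$; then $\IBr(B)$ is in natural bijection with the $e$ edges of the Brauer tree $T$ of $B$, and the conjugation action of $G$ factors through $G/N$ to give a cyclic group of automorphisms of $T$ preserving the exceptional vertex and the planar embedding (the cyclic ordering of edges at each vertex). The goal is to show that the $G/N$-orbits on the edges of $T$ all have the same length; the principal block statement is then immediate, because the trivial Brauer character $1_N\in\IBr(B_0(N))$ is $G$-fixed, so $\frac{1}{2}$-transitivity forces every orbit to be a singleton.

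For the general case I would reduce to the Brauer correspondent. Let $D$ be a defect group of $B$; by a Frattini argument one may assume $G=N\cdot\N_G(D)$, so $G/N\cong\N_G(D)/\N_N(D)$. Let $b$ be the Brauer correspondent of $B$ in $\N_N(D)$, which is $\N_G(D)$-stable since $B$ is $G$-stable and Brauer correspondence is canonical. By standard cyclic defect theory (e.g.\ Dade's theorem, or Green correspondence applied to simple modules, which have vertex $D$ in a cyclic defect block), one obtains an $\N_G(D)$-equivariant bijection $\IBr(B)\leftrightarrow\IBr(b)$, so it suffices to prove $\frac{1}{2}$-transitivity for the $\N_G(D)/\N_N(D)$-action on $\IBr(b)$.

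The payoff is that $D$ is now a normal cyclic defect group in $\N_N(D)$, and the Brauer tree of such a block is a star: all $e$ edges meet at the exceptional vertex (with straightforward adjustments in the degenerate cases $e\le 2$ or $m=1$). Any automorphism of a star that fixes its centre and preserves the cyclic ordering of edges around it is a rotation of the $e$ peripheral edges, and a cyclic group acting by rotations on a cyclic set has all orbits of the same length (the order of the rotation, a divisor of $e$); this is exactly $\frac{1}{2}$-transitivity. The main obstacle in the plan is to set up the $\N_G(D)$-equivariance of the bijection $\IBr(B)\leftrightarrow\IBr(b)$ rigorously; once this is in place, the rest is elementary combinatorics of star automorphisms.
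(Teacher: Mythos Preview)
Your approach is correct and genuinely different from the paper's.

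Both arguments share the same reduction step: pass to the Brauer correspondent $b$ in $\N_N(D)$ via a $G/N$-equivariant bijection $\IBr(B)\leftrightarrow\IBr(b)$ (the paper cites Dade and Koshitani--Sp\"ath for this), so one may assume $D\unlhd N$. From here the two proofs diverge. The paper argues by contradiction and by counting: it first reduces to $|G/N|=q$ prime, then uses that the unique block $B_G$ of $G$ covering $B$ again has cyclic defect group $D$, so Dade's formula $k(B_G)=l(B_G)+\frac{|D|-1}{l(B_G)}$ holds; comparing this with the value of $k(B_G)$ obtained from Clifford theory (splitting according to whether $\C_G(D)\le N$ or not) forces either all Brauer characters to be fixed or none to be. Your argument is structural instead: with $D\unlhd N$ the Brauer tree is a star, and the $G$-action respects the cyclic order of the edges around the centre, so it is a rotation and all orbits have equal length. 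Your route is shorter and more conceptual; the paper's route is more elementary in that it avoids the module-theoretic input about Brauer tree algebras.

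One comment on where the real content of your argument lies. You flag the equivariance of $\IBr(B)\leftrightarrow\IBr(b)$ as the main obstacle, but that is a citation. The substantive point is the assertion that the $G$-action \emph{preserves the cyclic order} (not merely the star as an abstract graph): a reflection of a star fixes the centre but is not $\tfrac{1}{2}$-transitive on edges when $e\ge 3$, so this step is essential. The cleanest justification in the normal-defect case is that the projectives of $B$ are uniserial, so the relation ``$S'$ is the head of $\operatorname{rad}P_S$'' defines a single directed $e$-cycle on the simples; any algebra automorphism (in particular conjugation by $g\in G$) preserves this relation and hence acts as a power of the cycle. Equivalently, the action commutes with the Heller operator $\Omega$. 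With this made explicit, your proof is complete.
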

\begin{proof}
Let us assume the contrary. By replacing $G$ with a group $H$ such that $N\le H\le G$, we may assume that the action of $G$ on $\IBr(B)$ has fixed points and all non-trivial orbits have the same length $q$. We may assume further that $q$ is a prime and $|G/N|$ is a power of $q$. By Dade's theory of cyclic defect groups~\cite{Dade} (see also \cite[Theorem~8.6]{habil}), $l(B)<p$. In particular, $q\ne p$.
Let $N\le H\le G$ such that $|G/H|=q$. Then $H$ acts trivially on $\IBr(B)$. 
Since $H/N$ is cyclic, it is well-known that all irreducible Brauer characters of $B$ extend to $H$ (see \cite[Theorem~8.12 and Corollary~8.20]{Navarro}). By \cite[Theorem~9.4]{Navarro}, there is a block $B_H$ of $H$ covering $B$ such that $\IBr(B_H)$ contains an extension of every irreducible Brauer character of $B$. In particular, $\IBr(B_H)$ also contains a $G$-invariant character and $B_H$ is $G$-stable. Moreover, $B$ and $B_H$ have the same defect group. Since $\IBr(B_H)$ also contains non-stable characters, we may replace $N$ by $H$ and $B$ by $B_H$. Thus we may assume that $|G/N|=q$ in the following.

Let $D$ be a defect group of $B$. Since $G$ acts on the defect groups of $B$, the Frattini argument implies $G=N\N_G(D)$. Hence, we may regard $G/N$ as a quotient of $\N_G(D)$. Let $b$ be a Brauer correspondent of $B$ in $\N_N(D)$. By work of Dade~\cite{Dadeconj2} (see also \cite[Lemma~4.1]{KoshitaniSpath}), the $G/N$-sets $\IBr(B)$ and $\IBr(b)$ are isomorphic. Therefore, we may assume that $D\unlhd G$. 
Let $e\mid p-1$ be the inertial index of $B$.
By Külshammer~\cite{Kuelshammer} (see also \cite[Theorem~1.19]{habil}), $B$ is Morita equivalent to the group algebra of $D\rtimes Z_e$. In particular, the decomposition matrix of $B$ is given by
\begin{equation}\label{decomp}
\begin{pmatrix}
1&0&\cdots&0\\
0&\ddots&\ddots&\vdots\\
\vdots&\ddots&\ddots&0\\
0&\cdots&0&1\\
1&\cdots&\cdots&1\\
\vdots&&&\vdots\\
1&\cdots&\cdots&1
\end{pmatrix}
\end{equation}
where the first rows correspond to the non-exceptional characters $\chi_1,\ldots,\chi_e\in\Irr(B)$ (see for instance, \cite[Theorem~VII.2.12]{Feit}). 
Since $|G/N|$ is a prime, there exists a $\phi\in\IBr(B)$ such that $\phi^G\in\IBr(G)$. This shows that $B$ is covered by a unique block $B_G$ of $G$ (see \cite[Theorem~9.4]{Navarro}). 
Since $G/N$ is $p'$-group, $B_G$ also has defect group $D$.
Let $e=rq+s$ where $s$ is the number of $G$-invariant Brauer characters of $B$. Then Clifford theory gives $l(B_G)=r+sq$. Now we study the relation between $k(B)=e+\frac{|D|-1}{e}$ and $k(B_G)$. If $G=N\C_G(D)$, then, by considering the generalized decomposition numbers, it is easy to see that $G$ acts trivially on the exceptional characters of $B$. Hence, $k(B_G)=r+sq+q\frac{|D|-1}{e}$. On the other hand, we have $k(B_G)=l(B_G)+\frac{|D|-1}{l(B_G)}$. 
This implies $rq+s=e=q(r+sq)$ and we derive the contradiction $s=0$. We conclude that $\C_G(D)\le N$. Here $G/N$ acts semiregularly on the exceptional characters. This gives $k(B_G)=r+sq+\frac{|D|-1}{eq}$, $(rq+s)q=eq=r+sq$ and $r=0$. Again a contradiction. The proves the first claim. The second claim is obvious, since the trivial character is always $G$-invariant.
\end{proof}

In the situation of \autoref{halftrans}, it is clear that an appropriate quotient $G/M$ with $N\le M$ acts semiregularly on $\IBr(B)$. The action on $\IBr(B)$ induces a graph automorphism on the Brauer tree of $B$. Therefore, a non-trivial action greatly restricts the possible Brauer trees.
We will see later in \autoref{metacyclic} that all possible $\frac{1}{2}$-transitive actions actually occur.

\begin{Lemma}\label{lembasic}
Let $G$ be a finite group with $N\unlhd G$ such that $G/N$ is cyclic. Let $B$ be a $G$-stable block of $N$ with cyclic defect group $D$. Then there exists a basic set $\mathcal{B}$ for $B$ which is $G/N$-isomorphic to $\IBr(B)$ and the Cartan matrix of $B$ with respect to $\mathcal{B}$ is given by $(m+\delta_{ij})$ with $m=\frac{|D|-1}{l(B)}$.
\end{Lemma}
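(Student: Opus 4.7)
The plan is to construct $\mathcal{B}$ explicitly from the Brauer tree $T$ of $B$. Recall that $T$ has $e:=l(B)$ edges, which canonically biject with $\IBr(B)$, and $e+1$ vertices; one distinguished vertex (the exceptional vertex) carries the $m$ exceptional characters of $\Irr(B)$, while each of the remaining $e$ vertices carries a unique non-exceptional character. Since $B$ is $G$-stable, $G/N$ acts on $T$ by graph automorphisms fixing the exceptional vertex, hence preserving the graph distance $d(\,\cdot\,)$ to it. For each $\phi\in\IBr(B)$ let $\chi_\phi\in\Irr(B)$ be the endpoint of $\phi$ farther from the exceptional vertex (automatically non-exceptional); write $d_\phi:=d(\chi_\phi)\ge 1$ and $\epsilon_\phi:=(-1)^{d_\phi-1}$. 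I propose
\[\mathcal{B}:=\{\,\epsilon_\phi\chi_\phi^0:\phi\in\IBr(B)\,\}.\]

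To see that $\mathcal{B}$ is a basic set, order the edges by $d_\phi$ ascending. One then has $\chi_\phi^0=\phi+\sum\phi'$, where $\phi'$ ranges over the edges at $\chi_\phi$ leading away from the exceptional vertex; each such $\phi'$ comes strictly later in the order, so the transition matrix from $\{\chi_\phi^0\}$ to $\IBr(B)$ is upper-triangular unipotent. For $G/N$-equivariance, any automorphism of $T$ fixing the exceptional vertex preserves distances and sends far endpoints of edges to far endpoints of their images, so $\gamma\cdot(\epsilon_\phi\chi_\phi^0)=\epsilon_{\gamma\phi}\chi_{\gamma\phi}^0$ for every $\gamma\in G/N$; the map $\phi\mapsto\epsilon_\phi\chi_\phi^0$ is therefore a $G/N$-set isomorphism $\IBr(B)\to\mathcal{B}$.

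To compute the Cartan matrix, I work first in the intermediate basis $\{\chi_\phi^0\}$. Each non-exceptional character gives a standard basis vector in the resulting decomposition matrix, while the $m$ exceptional characters each contribute the same row $\sum_{\phi\text{ adj.\ exc}}\phi$. The technical core is the identity
\[\sum_{\phi\text{ adj.\ exc}}\phi\;=\;\sum_{\phi\in\IBr(B)}\epsilon_\phi\chi_\phi^0,\]
which I would prove by comparing the coefficient of a fixed edge $\psi$ on both sides. Let $u$ be the endpoint of $\psi$ closer to the exceptional vertex: if $u$ is the exceptional vertex itself, then only $\phi=\psi$ contributes on the right and $\epsilon_\psi=1$ matches the coefficient $1$ on the left; otherwise $u$ is the far endpoint of a unique ``inward'' edge $\psi'$, and $\epsilon_\psi+\epsilon_{\psi'}=0$ because $d_{\psi'}=d_\psi-1$. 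It follows that in $\{\chi_\phi^0\}$ the Cartan matrix has entries $\delta_{\phi\phi'}+m\epsilon_\phi\epsilon_{\phi'}$, and conjugating by $\operatorname{diag}(\epsilon_\phi)$---i.e.\ passing from $\{\chi_\phi^0\}$ to $\mathcal{B}$---absorbs the signs to yield $(m+\delta_{ij})$.

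The main obstacle is this combinatorial identity on the Brauer tree; once the ``farther endpoint / parity of depth'' convention is fixed, it reduces to a local sign cancellation at each edge. The remaining verifications (basis property, Cartan form, $G/N$-equivariance) then fall into place.
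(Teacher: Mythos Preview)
Your proof is correct and more self-contained than the paper's. Both arguments produce essentially the same basic set---the restrictions $\chi_\phi^0$ of the non-exceptional characters to $p$-regular elements---but you add the sign twist $\epsilon_\phi=(-1)^{d_\phi-1}$, while the paper does not. For the Cartan matrix, the paper invokes Rouquier's perfect isometry between $B$ and the group algebra of $D\rtimes Z_l$ to assert that the decomposition matrix in the new basis takes the standard star shape; you instead prove the combinatorial identity $\sum_{\phi\text{ adj.\ exc}}\phi=\sum_\phi\epsilon_\phi\chi_\phi^0$ directly on the Brauer tree, which yields the Cartan form without any external input. In fact your signs clarify a point the paper glosses over: in the unsigned basis $\{\chi_\phi^0\}$ the Cartan matrix works out to $(\delta_{ij}+m\,\epsilon_i\epsilon_j)$, which is only literally $(m+\delta_{ij})$ when the tree is a star; your diagonal sign change is exactly what absorbs this discrepancy. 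Your equivariance argument via tree automorphisms preserving distance to the exceptional vertex is also more transparent than the paper's induction on a unitriangular ordering. One small point worth a sentence: when $m=1$ the exceptional vertex is not singled out by its multiplicity, so you should say why $G/N$ still fixes it (it is canonically determined by the block structure and hence preserved by conjugation).
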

\begin{proof}
Let $Q=(q_{ij})$ be the decomposition matrix of $B$ (with respect to $\IBr(B)$). We may assume that the first $l:=l(B)$ rows correspond to the non-exceptional characters $\chi_1,\ldots,\chi_l$ while the last $m$ rows correspond to the exceptional characters and therefore are all equal.
Moreover, we order the non-exceptional characters and the Brauer characters $\phi_1,\ldots,\phi_l$ in such a way that $Q$ has unitriangular shape (pick leaves from the Brauer tree successively).
Let $S:=(q_{ij})_{i,j=1}^l$. Then $\det(S)=1$ and $S\in\GL(l,\mathbb{Z})$. Hence, we may define $\mathcal{B}=\{\widetilde{\phi}_1,\ldots,\widetilde{\phi}_l\}$ with $\widetilde{\phi}_i:=q_{i1}\phi_1+\ldots+q_{il}\phi_l$ for $i=1,\ldots,l$. 

For $g\in G$ we write $^g\phi_i=\phi_{^gi}$. We will show that $^g\chi_i=\chi_{^gi}$. This is obvious if $i=1$, because $\chi_1$ is a lift of $\phi_1$. Now assume that we have shown the claim for all $k<i$. We have $({^g\chi_i})_{|G_{p'}}=q_{i1}\phi_{^g1}+\ldots+q_{ii}\phi_{^gi}$ where $G_{p'}$ is the set of $p$-regular elements of $G$. By way of contradiction, suppose that $^gi<{^gj}$ for some $j<i$ with $q_{ij}\ne0$. Then it follows that $S$ has two rows of the form
\[\begin{pmatrix}
*&\cdots&*&1&0&\cdots&0\\
*&\cdots&*&1&0&\cdots&0\\
\end{pmatrix}.\]
Hence, $Q$ is not unitriangular and we have a contradiction. Therefore, $^g\chi_i=\chi_{^gi}$ for $i=1,\ldots,l$.

We conclude that
\[^g\widetilde{\phi}_i=q_{i1}\phi_{^g1}+\ldots+q_{il}\phi_{^gl}=(\chi_{^gi})_{|G_{p'}}=q_{^gi,1}\phi_1+\ldots+q_{^gi,l}\phi_l=\widetilde{\phi_{^gi}}.\]
Hence, $\IBr(B)$ and $\mathcal{B}$ are $G/N$-isomorphic.
Let $\mathcal{Q}$ be the decomposition matrix of $B$ with respect to $\mathcal{B}$. Then the first $l$ rows of $\mathcal{Q}$ form an identity matrix.
On the other hand, $B$ is perfectly isometric to the group algebra of $D\rtimes Z_l$ (see for example \cite{Rouquiercyclic}). This implies that $\mathcal{Q}$ is given by \eqref{decomp} above. Form that, the last claim follows easily.
\end{proof}

Next we prove a variant of \autoref{semidirect}.

\begin{Lemma}\label{metacyclic}
Let $p$ be an odd prime, $n_1,n_2\in\mathbb{N}$, $l_1,l_2\mid p-1$, $d\mid(l_1,l_2)$, and let $\gamma_i\in(\mathbb{Z}/p^{n_i}\mathbb{Z})^\times$ of order $l_i$ for $i=1,2$. Let $B$ be a block of 
\begin{align*}
G:=\langle u,v,x,y\mid\,& u^{p^{n_1}}=v^{p^{n_2}}=[u,v]=x^{dl_1}=y^{l_2}=[u,y]=[v,x]=1,\\
&xux^{-1}=u^{\gamma_1},\ yvy^{-1}=v^{\gamma_2},\ yxy^{-1}=x^{1+l_1}\rangle
\end{align*}
lying over a generator $\lambda$ of $\Irr(\langle x^{l_1}\rangle)$. Let $(u,b_u)$ be a $B$-subsection. Then $l(b_u)=l_2$ and the generalized decomposition matrix $Q_u=(d^u_{\chi\phi}:\chi\in\Irr(B),\ \phi\in\IBr(b_u))$ is given as follows

\[Q_u=\begin{pmatrix}
\overmat{d}{1&\cdots&1}{\tr_d(\zeta_1^{\gamma_1})&\cdots&m}\\
\vdots&&\vdots\\
1&\cdots&1\\
&&&\ddots\\
&&&&1&\cdots&1\\
&&&&\vdots&&\vdots\\
&&&&1&\cdots&1\\
\tr_d(\zeta_1^{\gamma_1})&\cdots&\tr_d\bigl(\zeta_1^{\gamma_1^d}\bigr)\\
\vdots&&\vdots\\
\tr_d(\zeta_m^{\gamma_1})&\cdots&\tr_d\bigl(\zeta_m^{\gamma_1^d}\bigr)\\
&\vdots\\
\tr_d\bigl(\zeta_1^{\gamma_1^d}\bigr)&\cdots&\tr_d\bigl(\zeta_1^{\gamma_1^{d^2}}\bigr)\\
\vdots&&\vdots\\
\tr_d\bigl(\zeta_m^{\gamma_1^d}\bigr)&\cdots&\tr_d\bigl(\zeta_m^{\gamma_1^{d^2}}\bigr)\\
&&&\ddots\\
&&&&\tr_d(\zeta_1^{\gamma_1})&\cdots&\tr_d\bigl(\zeta_1^{\gamma_1^d}\bigr)\\
&&&&\vdots&&\vdots\\
&&&&\tr_d(\zeta_m^{\gamma_1})&\cdots&\tr_d\bigl(\zeta_m^{\gamma_1^d}\bigr)\\
&&&&&\vdots\\
&&&&\tr_d\bigl(\zeta_1^{\gamma_1^d}\bigr)&\cdots&\tr_d\bigl(\zeta_1^{\gamma_1^{d^2}}\bigr)\\
&&&&\vdots&&\vdots\\
&&&&\tr_d\bigl(\zeta_m^{\gamma_1^d}\bigr)&\cdots&\tr_d\bigl(\zeta_m^{\gamma_1^{d^2}}\bigr)\\[2mm]
1&\cdots&\cdots&\cdots&\cdots&\cdots&1\\
\vdots&&&&&&\vdots\\
1&\cdots&\cdots&\cdots&\cdots&\cdots&1\\[1mm]
\tr(\zeta_1)&\cdots&\cdots&\cdots&\cdots&\cdots&\tr(\zeta_1)\\
\vdots&&&&&&\vdots\\
\tr(\zeta_1)&\cdots&\cdots&\cdots&\cdots&\cdots&\tr(\zeta_1)\\[1mm]
\tr(\zeta_2)&\cdots&\cdots&\cdots&\cdots&\cdots&\tr(\zeta_2)\\
\vdots&&&&&&\vdots\\
\tr(\zeta_2)&\cdots&\cdots&\cdots&\cdots&\cdots&\tr(\zeta_2)\\
&&&\vdots&&&\\
\tr(\zeta_m)&\cdots&\cdots&\cdots&\cdots&\cdots&\tr(\zeta_m)\\
\vdots&&&&&&\vdots\\
\tr(\zeta_m)&\cdots&\cdots&\cdots&\cdots&\cdots&\tr(\zeta_m)
\end{pmatrix}
\begin{aligned}
&\hspace{-2mm}\left.\begin{matrix}
\vphantom{1}\\
\vphantom{\vdots}\\
\vphantom{1}
\end{matrix}\right\}\frac{l_1}{d}\\[20mm]
&\hspace{-2mm}\left.\begin{matrix}
\vphantom{\tr_d\bigl(\zeta_1^{\gamma_1^d}\bigr)}\\
\vphantom{\vdots}\\
\vphantom{\tr_d\bigl(\zeta_m^{\gamma_1^d}\bigr)}
\end{matrix}\right\}m=\frac{p^{n_1}-1}{l_1}\\[75mm]
&\hspace{-2mm}\left.\begin{matrix}
\vphantom{1}\\
\vphantom{\vdots}\\
\vphantom{1}
\end{matrix}\right\}l_1\frac{p^{n_2}-1}{l_2}\\ 
&\hspace{-2mm}\left.\begin{matrix}
\vphantom{\tr(\zeta_1)}\\
\vphantom{\vdots}\\
\vphantom{\tr(\zeta_1)}
\end{matrix}\right\}\frac{p^{n_2}-1}{l_2}\\[37mm]
\end{aligned}
\]
where $\zeta_1,\ldots,\zeta_m$ is a set of representatives of the non-trivial $p^{n_1}$-th roots of unity under the action of $\gamma_1$, $\tr(\zeta_i)=\sum_{j=1}^{l_1}{\zeta_i^{\gamma_1^j}}$ and $\tr_d(\zeta_i)=\sum_{j=1}^{l_1/d}{\zeta_i^{\gamma_1^{jd}}}$.
\end{Lemma}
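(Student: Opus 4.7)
My plan is to reduce to a computation in a central quotient. Let $Z := \langle x^{l_1}\rangle$. The divisibility $d\mid l_1$ together with $yxy^{-1}=x^{1+l_1}$ gives $yx^{l_1}y^{-1} = x^{l_1(1+l_1)} = x^{l_1}$, so $Z\le\Z(G)$. Modulo $Z$ the group splits as a direct product: $G/Z\cong H_1\times H_2$, with $H_1 := \langle u\rangle\rtimes\langle\bar x\rangle$ of order $p^{n_1}l_1$ (where $\bar x$ acts on $u$ via $\gamma_1$) and $H_2 := \langle v\rangle\rtimes\langle\bar y\rangle$ of order $p^{n_2}l_2$ (where $\bar y$ acts on $v$ via $\gamma_2$). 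Since $Z$ is a central $p'$-subgroup and $\lambda\in\Irr(Z)$ is faithful, the block $B$ is (after tensoring with an extension of $\lambda^{-1}$) Morita equivalent to a block $\widetilde B$ of $H_1\times H_2$; for $p$-elements such as $u$, the generalised decomposition numbers transfer unchanged under this correspondence because the $\lambda$-factor on the central $Z$-coordinate cancels between $\chi$ and $\phi$. It therefore suffices to carry out the computation inside $H_1\times H_2$ and tensor back with $\lambda$ at the end.

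Next I pin down $b_u$. Since $x^k$ centralises $u$ iff $l_1\mid k$, a direct check gives $C_G(u) = \langle u\rangle\times H_2 \times Z$. The block $b_u$ corresponds under the $Z$-reduction to the principal block of $H_2$, whose normal Sylow $p$-subgroup is $\langle v\rangle$ with $p'$-complement $\langle y\rangle$ of order $l_2\mid p-1$. Hence $\IBr(b_u)$ is in natural bijection with $\Irr(\langle y\rangle)$, and $l(b_u) = l_2$, as claimed.

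For the matrix itself, I use Clifford theory with respect to the abelian normal subgroup $A := \langle u, v\rangle\cdot Z\unlhd G$, whose quotient $G/A\cong\langle\bar x\rangle\times\langle\bar y\rangle$ acts via $\gamma_1$ on the $u$-coordinate and via $\gamma_2$ on the $v$-coordinate. Every $\chi\in\Irr(B)$ lies above a linear character $\mu\otimes\nu\otimes\lambda$ of $A$ with $\mu\in\Irr(\langle u\rangle)$ and $\nu\in\Irr(\langle v\rangle)$, yielding three essentially disjoint families: \emph{(i)} $\mu=\nu=1$, producing the characters inflated from $G/\langle u,v\rangle = \langle x, y\rangle$ that lie above $\lambda$, which account for the top block of $1$'s; \emph{(ii)} $\mu\ne 1,\ \nu=1$, giving the middle band with entries $\tr_d(\zeta_i^{\gamma_1^k})$; and \emph{(iii)} $\mu=1,\ \nu\ne 1$, symmetric on the $v$-factor and producing the bottom band of full traces $\tr(\zeta_i)$. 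For each $\chi$ in each family I compute $\chi(us)$ for $p$-regular $s\in C_G(u)$ via the induced-character formula and then read off $d^u_{\chi\phi}$ from $\chi(us) = \sum_{\phi\in\IBr(b_u)} d^u_{\chi\phi}\phi(s)$; the three rightmost column-blocks in the displayed matrix correspond to the three families of Brauer characters obtained from the trivial, intermediate, and faithful characters of $\langle y\rangle$ in $\IBr(H_2)$.

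The main obstacle will be family \emph{(ii)}: the twist $yxy^{-1} = x^{1+l_1}$ combined with the faithfulness of $\lambda$ on $Z=\langle x^{l_1}\rangle$ forces $y$ to permute the $l_1$ extensions of $\mu\otimes\lambda$ from $\langle u\rangle\cdot Z$ to $\langle u, x\rangle\cdot Z$ as a cyclic rotation of order exactly $d$ (rather than trivially, as would occur if $\lambda$ were trivial and one were in the setting of \autoref{semidirect}). Consequently the inertia subgroup in $G/A$ has index $d$, induction produces sums of $l_1/d$ Galois conjugates --- precisely the partial traces $\tr_d$ --- and the $y$-orbit structure accounts for the $l_1/d$-fold repetition of blocks visible in the middle band. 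Carefully tracking these orbit lengths, the corresponding character values and the way the resulting induced characters decompose on $C_G(u)$ in terms of $\IBr(b_u)$ is the technical crux of the lemma; everything else is bookkeeping analogous to the proof of \autoref{semidirect}.
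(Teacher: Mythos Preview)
Your overall strategy --- Clifford theory with respect to the abelian normal subgroup $A=\langle u,v,x^{l_1}\rangle$ and a case split according to which of $\mu,\nu$ is trivial --- is exactly the paper's approach, and your analysis of the crucial family (ii) (the $\tr_d$-entries forced by the twist $yxy^{-1}=x^{1+l_1}$) is essentially correct. But two concrete problems need fixing.

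First, your opening reduction to $H_1\times H_2$ is wrong. You assert that $B$ becomes, after tensoring with an extension of $\lambda^{-1}$, a block of $G/Z$. But $\lambda$ does \emph{not} extend to a linear character of $G$: from $yxy^{-1}=x^{1+l_1}$ one reads off $[y,x]=x^{l_1}$, so $Z=\langle x^{l_1}\rangle\le G'$, and every linear character of $G$ is trivial on $Z$. Hence $B$ is Morita equivalent only to a genuinely \emph{twisted} group algebra of $H_1\times H_2$, not to an ordinary block of it; indeed, that nontrivial cocycle is precisely what you later (correctly) invoke to produce orbit length $d$ in family (ii). Fortunately you never actually use the reduction in the subsequent computation, so the error is in the framing rather than the substance --- but it should be dropped.

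Second, your case analysis is incomplete and partly mislabeled. The displayed matrix contains \emph{four} families of rows, not three: (i) $\mu=\nu=1$ gives the top diagonal $(1,\ldots,1)$-blocks; (ii) $\mu\ne1$, $\nu=1$ gives the $\tr_d$-blocks; (iii) $\mu=1$, $\nu\ne1$ gives the $l_1\tfrac{p^{n_2}-1}{l_2}$ constant all-$1$ rows (here $\chi(us)=\chi(s)$ because $\mu$ is trivial on $u$); and (iv) $\mu\ne1$, $\nu\ne1$ gives the bottom $\tr(\zeta_i)$-rows. Your family ``(iii)'' conflates the last two and assigns the $\tr(\zeta_i)$-rows to the case $\mu=1$, which is impossible: if $\mu=1$ then evaluation at $u$ produces no nontrivial root of unity. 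The paper treats (iii) and (iv) separately; both are easy once (ii) is understood, but they must be present for the row count to come out right.
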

\begin{proof}
Since $x^{l_1}\le\Z(G)$, $\lambda$ is $G$-invariant. Hence, by the extended first main theorem (see \cite[Theorem~9.7]{Navarro}) the characters of $\Irr(B)$ lying over $\lambda$ indeed form a block. 
Observe that $\C_G(u)=\langle u,v,x^{l_1},y\rangle$. Hence, $\IBr(b_u)=\{\lambda\rho:\rho\in\Irr(\langle y\rangle)\}$ and $l(b_u)=l_2$. Since $\lambda$ is $G$-invariant, so is $b_u$. 
Now we will use Clifford theory to construct $\Irr(B)$. We will see that all the ramification indices of characters in $\Irr(B)$ are trivial.
Since $^x(\lambda\rho)(y)=(\lambda\rho)(x^{l_1}y)=\lambda(x^{l_1})\rho(y)$, it follows that the orbits of $\IBr(b_u)$ under $G$ all have length $d$. Every character $\lambda\rho\in\IBr(b_u)$ extends in $l_1/d$ many ways to $\langle x^d,y\rangle$. Every such extension induces to $\Irr(B)\cup\IBr(B)$ (considered as inflation from $G/\langle u,v\rangle$). This gives $l_1l_2/d^2$ irreducible (Brauer) characters of $B$. Let $\chi\in\Irr(B)$ be one of them, and let $s\in\langle x^{l_1},y\rangle$. Then $\chi(us)=\chi(s)=(\lambda\rho_1)(s)+\ldots+(\lambda\rho_d)(s)$ where $\lambda\rho_1,\ldots,\lambda\rho_d\in\IBr(b_u)$ is an orbit under $G$. Hence, the corresponding lines of $Q_u$ have the form $(1,\ldots,1,0,\ldots,0)$. 

Now we compute the characters lying over $\theta\lambda\in\Irr(\langle u,v,x^{l_1}\rangle)$ where $1\ne\theta\in\Irr(\langle u\rangle)$. Obviously, $\theta\lambda$ extends in $l_2$ many ways to $\langle u,v,x^{l_1},y\rangle$. Each such extension induces to $\Irr(B)$. This gives $l_2$ new characters. 
For one of them we have 
\[\chi(us)=(\theta_1(u)+\ldots+\theta_{l_1/d}(u))(\lambda\rho_1)(s)+\ldots+(\theta_{l_1-l_1/d+1}(u)+\ldots+\theta_{l_1}(u))(\lambda\rho_d)(s)=\sum_{j=1}^d{\tr_d\bigl(\zeta_i^{\gamma_1^j}\bigr)(\lambda\rho_j)(s)}\]
for some $i$. We have $(p^{n_1}-1)/l_1$ essentially different choices for $\theta$. Thus in total we get $l_2(p^{n_1}-1)/l_1$ characters in this way.

Next consider $\theta\lambda\in\Irr(\langle u,v,x^{l_1}\rangle)$ where $1\ne\theta\in\Irr(\langle v\rangle)$. This character extends in $l_1$ many ways to $\langle u,v,x\rangle$ and the extensions induce to $\Irr(B)$. In total there are $l_1(p^{n_2}-1)/l_2$ such characters $\chi$. Since $\langle u,v,x^{l_1}\rangle\unlhd\langle u,v,x^{l_1},y\rangle$, it is easy to see that $\chi(us)=\chi(s)=\bigl(\sum_{i=1}^{l_2}\lambda\rho_i\bigr)(s)$.

Finally, let $1\ne\theta\in\Irr(\langle u\rangle)$ and $1\ne\eta\in\Irr(\langle v\rangle)$. Then the characters $\theta\eta\lambda$ induce directly to $\Irr(B)$. If $\chi$ is one of them, we get \[\chi(us)=\Bigl(\sum_{i=1}^{l_1}{^{x^i}(\theta\eta\lambda)^{\langle u,v,x^{l_1},y\rangle}}\Bigr)(us)=\sum_{i=1}^{l_1}{\theta_i(u)}\sum_{j=1}^{l_2}{(\lambda\rho_i)(s)}=\tr(\zeta_i)\sum_{j=1}^{l_2}{(\lambda\rho_i)(s)}.\]
It is easy to see that this amounts all the characters in $\Irr(B)$. 
\end{proof}

\begin{Lemma}\label{finallem}
Let $Q_u$ be the matrix from \autoref{metacyclic} with columns $q_i$. Then we can write $q_i$ with respect to the integral basis $\widetilde{T}_n$ from \autoref{intbas} with $n=n_1$ and $|\mathcal{N}|=l_1/d$. Hence, let $A_i\in\mathbb{Z}^{k\times p^{n-1}(p-1)d/l_1}$ such that $q_i=A_i\widetilde{T}_n$. 
Let $A=(A_{ij})_{i,j=1}^{l_2/d}$ the block matrix with blocks 
\[A_{ij}=\begin{cases}
A_1^\textup{T}A_1&\text{if }i=j,\\
A_1^\textup{T}A_{d+1}&\text{if }i\ne j.
\end{cases}\]
Let $U\in\mathbb{Z}^{r\times s}$ without vanishing rows such that $U^\textup{T}U=A$. Then 
\[r\le\frac{p^{n_1}-1}{l_1}\frac{p^{n_2}-1}{l_2}+l_1\frac{p^{n_2}-1}{l_2}+l_2\frac{p^{n_1}-1}{l_1}+\frac{l_1l_2}{d^2}.\]
\end{Lemma}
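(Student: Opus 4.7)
The overall strategy mirrors the proof of \autoref{stablecase}. I would exhibit a positive definite quadratic form $q^*$ on $\mathbb{Z}^s$, whose Gram matrix $G^*$ satisfies $q^*(x)\ge 1$ for every nonzero integral $x$ and $\tr(G^*A)$ equals the right-hand side of the claimed inequality. Then, for any $U$ as in the hypothesis,
\[r\le\sum_{i=1}^{r}q^*(u_i)=\tr(UG^*U^{\textup{T}})=\tr(G^*U^{\textup{T}}U)=\tr(G^*A).\]

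First I would make the matrices $A_1^{\textup{T}}A_1$ and $A_1^{\textup{T}}A_{d+1}$ explicit by expanding the first and $(d+1)$-st columns of $Q_u$ in the basis $\widetilde{T}_n$. Every entry of the form $\tr(\zeta_i)$ or $\tr_d(\zeta_i^{\gamma_1^j})$ attached to a $\zeta_i$ of order larger than $p$ is first rewritten using $1=-\sum_{j=1}^{p-1}\zeta^{jp^{n_1-1}}$, exactly as in the proof of \autoref{semidirect}; this reproduces the block of $\pm 1$'s appearing in the matrix displayed there. The four row types of $Q_u$ listed in \autoref{metacyclic} then contribute block-diagonally to $A_1^{\textup{T}}A_1$. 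Only the row types which are constant across orbits (Types~3 and~4) contribute to $A_1^{\textup{T}}A_{d+1}$, since rows of Types~1 and~2 have disjoint supports on the orbit representatives $1$ and $d+1$.

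Next I would take $q^*=q_{\textup{out}}\otimes q_A$, where $q_A$ is the Dynkin $A$-form matching the integral basis $\widetilde{T}_n$ (the same $q_A$ as in \autoref{semidirect}) and $q_{\textup{out}}$ is a positive definite integral form on $\mathbb{Z}^{l_2/d}$ with $q_{\textup{out}}(x)\ge 1$ on nonzero integers. By \autoref{lemtensor}, this yields $q^*(y)\ge 1$ on all nonzero $y\in\mathbb{Z}^s$. With $G_{\textup{out}}$ the Gram matrix of $q_{\textup{out}}$ and $G_A$ that of $q_A$, so $G^*=G_{\textup{out}}\otimes G_A$, the block form of $A$ gives
\[\tr(G^*A)=\alpha\,\tr(G_A\,A_1^{\textup{T}}A_1)+\beta\,\tr(G_A\,A_1^{\textup{T}}A_{d+1}),\quad\alpha:=\sum_i(G_{\textup{out}})_{ii},\quad\beta:=\sum_{i\ne j}(G_{\textup{out}})_{ij}.\]
The first trace is accessible through \autoref{semidirect} applied to the single column $q_1$, which already packages the Type~1–Type~4 contributions into one number of the shape $k_0$ of a semidirect product; the second trace only sees the Type~3 and Type~4 contributions.

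The remaining step is to calibrate $q_{\textup{out}}$ so that $\alpha$ and $\beta$ combine these two trace values into exactly the four summands $\frac{l_1l_2}{d^2}$, $\frac{l_2(p^{n_1}-1)}{l_1}$, $\frac{l_1(p^{n_2}-1)}{l_2}$ and $\frac{(p^{n_1}-1)(p^{n_2}-1)}{l_1l_2}$ on the right-hand side (which are precisely the row counts of the four blocks of $Q_u$). A natural candidate is an $(l_2/d)\times(l_2/d)$ form related to the Cartan matrix of $\overline{b_u}$ provided by \autoref{lembasic}; its $(m+\delta_{ij})$-shape with $m=(p^{n_2}-1)/l_2$ is well adapted to producing the factor $(p^{n_2}-1)/l_2$ that must appear in the Type~3 and Type~4 counts. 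I expect this calibration to be the main obstacle: because $A$ is not itself a tensor product, the off-diagonal coefficient $\beta$ must generate the Type~3 and Type~4 counts with the correct multiplicity while $\alpha-\beta$ simultaneously recovers the Type~1 and Type~2 counts; and one still has to verify $q_{\textup{out}}(x)\ge 1$ on every nonzero integer vector, perhaps by invoking a Cartan-matrix inequality in the spirit of \autoref{outer}.
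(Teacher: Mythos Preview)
Your overall strategy---choose a quadratic form $q^*$ with $q^*(x)\ge 1$ on nonzero lattice points and evaluate $\tr(G^*A)$---is exactly the one the paper uses. However, the specific ansatz $q^*=q_{\textup{out}}\otimes q_A$ with a single Dynkin factor $q_A$ on $\widetilde{T}_n$ is too coarse, and the appeal to \autoref{semidirect} for $\tr(G_A\,A_1^{\textup{T}}A_1)$ is where the argument breaks.

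The point is that the integral basis here is taken with $|\mathcal{N}|=l_1/d$, not $|\mathcal{N}|=l_1$; the entries $\tr_d(\zeta_i^{\gamma_1^j})$ are only \emph{partial} traces, so the single column $q_1$ is not governed by \autoref{semidirect}. When one actually computes $A_1^{\textup{T}}A_1$ (and $A_1^{\textup{T}}A_{d+1}$), the result carries an internal $d$-fold tensor structure---for $n=1$ it is $1_{d(p-1)/l_1}$ plus an all-ones $d\times d$ block tensored with a $(p-1)/l_1$-block, and for $n\ge 2$ one picks up further blocks of the shape $1_{p^{n-2}(p-1)/l_1}\otimes(\delta+\delta_{ij})_d\otimes(1+\delta_{ij})_{p-1}$. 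A single Dynkin $q_A$ on the whole basis cannot see this, and in particular $\tr(G_A\,A_1^{\textup{T}}A_1)$ is not the $k_0$ of any semidirect product in the sense of \autoref{semidirect}.

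What the paper does instead is: first permute $A$ into block-diagonal form indexed by the layers $p^{n-1}T_1,\,T_n\setminus pT_{n-1},\,\ldots$, then run induction on $n$, applying to each layer a \emph{three}- or \emph{four}-fold tensor of Dynkin $A$-forms, namely $y_{l_2/d}\otimes y_d\otimes y_{(p-1)/l_1}$ for the base case and $y_{l_2/d}\otimes y_{p^{n-2}(p-1)/l_1}\otimes y_d\otimes y_{p-1}$ for the inductive step (all justified by \autoref{lemtensor}). The outer factor is the Dynkin form $y_{l_2/d}$, not anything built from the Cartan matrix of $\overline{b_u}$; your Cartan-matrix candidate is both the wrong size ($l_2$ rather than $l_2/d$) and does not obviously satisfy the minimum-$1$ condition. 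The missing ingredients in your proposal are therefore: the explicit block-diagonal/tensor description of $A_1^{\textup{T}}A_1$ and $A_1^{\textup{T}}A_{d+1}$, the extra Dynkin factor $y_d$ that handles the partial-trace structure, and the induction on $n$.
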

\begin{proof}
First observe that $1=-\sum_{t\in p^{n-1}T_1}{\tr_d(\zeta^{t})}$ where $\zeta\in\mathbb{C}$ is a primitive $p^n$-th root of unity and $\tr_d$ as in \autoref{metacyclic}. Thus, the first rows of $A_1$ have the form $(-1,\ldots,-1,0,\ldots,0)$ with $(p-1)d/l_1$ entries $-1$. 
Now we use induction on $n$ to choose the elements $\zeta_i$ from \autoref{metacyclic} in such a way that $A_1$ has a nice shape. If $n=1$, then all the entries $\tr_d\bigl(\zeta_i^{\gamma_1^j}\bigr)$ in $q_1$ are just elements of $\widetilde{T}_n$. So the corresponding rows of $A_1$ consist of zeros and exactly one entry $1$. Now let $n\ge 2$. Then, by induction, the entries $\tr_d\bigl(\zeta_i^{\gamma_1^j}\bigr)$ where $\zeta_i$ is not primitive form a part $A_1^{(p)}$ of $A_1$ which we already know. There are $\frac{p^n-1}{l_1}-\frac{p^{n-1}-1}{l_1}=p^{n-1}\frac{p-1}{l_1}$ remaining entries (counting without multiplicities). We choose these $\zeta_i$ such that they form blocks $\{\zeta^{s+jp^{n-1}}:j=0,\ldots,p-1\}$ for some $s\in S_{n-1}$ with the notation from \autoref{intbas}. It is crucial here to see that these elements are not conjugate under $\gamma_1$. Moreover, only the elements $\zeta^{s+(p-1)p^{n-1}}$ do not lie in $\widetilde{T}_n$. Here however, $\zeta^{s+(p-1)p^{n-1}}=-\sum_{j=0}^{p-2}{\zeta^{s+jp^{n-1}}}$. Hence, the rows in $A_1$ corresponding to $\tr_d\bigl(\zeta_1^{\gamma_1^j}\bigr),\ldots,\tr_d\bigl(\zeta_m^{\gamma_1^j}\bigr)$ for a fixed $j$ have the following form:
\[\begin{pmatrix}
A_1^{(p)}\\
&1\\
\\
&\multicolumn{10}{c}{\diagdots[-13]{19em}{.4em}}\\
\\
&&&&&&&&&&1\\[1mm]
&-1&\cdots&-1\\
&&&&-1&\cdots&-1&&&&&0\\
&&&&&&&\ddots\\
&&&&&&&&-1&\cdots&-1
\end{pmatrix}\begin{aligned}
&\\[-1mm]
&\hspace{-2mm}\left.\begin{matrix}
\vphantom{1}\\
\\
\multicolumn{10}{c}{\vphantom{\diagdots[-13]{19em}{.4em}}}\\
\\
\vphantom{1}
\end{matrix}\right\}p^{n-2}\frac{(p-1)^2}{l_1}\\
&\hspace{-2mm}\left.\begin{matrix}
\vphantom{-1}\\
\vphantom{-1}\\
\vphantom{\vdots}\\
\vphantom{-1}
\end{matrix}\right\}p^{n-2}\frac{p-1}{l_1}\\ 
\end{aligned}\]

It remains to handle the numbers $\tr(\zeta_i)$. But this follows directly from what we have done, since $\tr(\zeta_i)=\sum_{j=1}^d{\tr_d(\zeta_i^{\gamma_1^j})}$. In case $n=1$ the matrix $A_1^\text{T}A_1$ is given as follows

\vspace{4mm}
\[1_{d\frac{p-1}{l_1}}+
\begin{pmatrix}
\overmat{d}{1&\cdots&1}{1&\cdots&1}\\
\vdots&&\vdots\\
1&\cdots&1
\end{pmatrix}
\otimes
\begin{pmatrix}
\overmat{\frac{p-1}{l_1}}{\alpha&\beta&\cdots&\beta}{\alpha&\cdots&\cdots&\beta}\\
\beta&\ddots&\ddots&\vdots\\
\vdots&\ddots&\ddots&\beta\\
\beta&\cdots&\beta&\alpha
\end{pmatrix}\hspace{1cm}
\begin{aligned}
&\alpha:=\frac{l_1}{d}+(l_1+1)\frac{p^{n_2}-1}{l_2},\\
&\beta:=\frac{l_1}{d}+l_1\frac{p^{n_2}-1}{l_2}.
\end{aligned}\]
Now let $n\ge 2$. 
Suppose that the matrix $A_1^{\text{T}}A_1$ for $n-1$ is given by $B_{n-1}$. Then $A_1^\text{T}A_1$ for $n$ has the form

\vspace{4mm}
\[A_1^\text{T}A_1=B_{n-1}\oplus\left(1_{p^{n-2}\frac{p-1}{l_1}}\otimes
\begin{pmatrix}
\overmat{d}{\delta+1&\delta&\cdots&\delta}{\delta+1&\cdots&\cdots&\delta+1}\\
\delta&\ddots&\ddots&\vdots\\
\vdots&\ddots&\ddots&\delta\\
\delta&\cdots&\delta&\delta+1
\end{pmatrix}\otimes
\begin{pmatrix}
\overmat{p-1}{2&1&\cdots&1}{2&\cdots&\cdots&1}\\
1&\ddots&\ddots&\vdots\\
\vdots&\ddots&\ddots&1\\
1&\cdots&1&2
\end{pmatrix}\right)\hspace{1cm}\begin{aligned}
\delta:=\frac{p^{n_2}-1}{l_2}
\end{aligned}
.\]
Here the integral basis vectors in the second summand are ordered in the following way: $s_1$, $s_1+p^{n-1},\ldots,s_1+(p-2)p^{n-1}$, $\gamma_1(s_1),\ldots,\gamma_1(s_1+(p-2)p^{n-1}),\ldots,\gamma_1^{d-1}(s_1+(p-2)p^{n-1})$, $s_2,\ldots$ where $s_i\in S_{n-1}$ with the notation from \autoref{intbas}.

Now assume that $d<l_2$, so that $A_{d+1}$ really exists. In case $n=1$ we have

\vspace{4mm}
\[A_1^\text{T}A_{d+1}=\delta
\begin{pmatrix}
\overmat{d}{1&\cdots&1}{1&\cdots&1}\\
\vdots&&\vdots\\
1&\cdots&1
\end{pmatrix}
\otimes
\begin{pmatrix}
\overmat{\frac{p-1}{l_1}}{l_1+1&l_1&\cdots&l_1}{l_1+1&\cdots&\cdots&l_1+1}\\
l_1&\ddots&\ddots&\vdots\\
\vdots&\ddots&\ddots&l_1\\
l_1&\cdots&l_1&l_1+1
\end{pmatrix}.\]
As before denote the matrix $A_1^\text{T}A_{d+1}$ for $n-1$ by $B_{n-1}'$. Then for $n\ge 2$ we obtain

\vspace{4mm}
\[A_1^\text{T}A_{d+1}=B_{n-1}'\oplus\delta\left(1_{p^{n-2}\frac{p-1}{l_1}}\otimes
\begin{pmatrix}
\overmat{d}{1&\cdots&1}{1&\cdots&1}\\
\vdots&&\vdots\\
1&\cdots&1
\end{pmatrix}\otimes
\begin{pmatrix}
\overmat{p-1}{2&1&\cdots&1}{2&\cdots&\cdots&1}\\
1&\ddots&\ddots&\vdots\\
\vdots&\ddots&\ddots&1\\
1&\cdots&1&2
\end{pmatrix}\right)
.\]
Finally, we consider $A=U^\text{T}U$. In order to estimate $r$, we may replace $A$ by $SAS^\text{T}$ where $S\in\GL(s,\mathbb{Z})$ with $s=p^{n-1}(p-1)l_2/l_1$. In particular, we may permute the rows and columns of $A$, so that $A$ has block diagonal form. The first block corresponds to the integral basis elements $p^{n-1}T_1$ and so on. It is enough to handle each of these blocks separately, i.\,e. we may assume that $U^\text{T}U$ equals one of these blocks. We use induction on $n$. If $n=1$, then there is only one block. Let $y_i$ be the quadratic form corresponding to the Dynkin diagram of type $A_i$ (not the matrix here). By \autoref{lemtensor}, we can use the quadratic form $y_{l_2/d}\otimes y_d\otimes y_{(p-1)/l_1}$ in order to estimate $r$ like in \autoref{stablecase}. For $n=1$ we get
\begin{align*}
r&\le \frac{l_2}{d}\biggl(d\Bigl((\alpha+1)\frac{p-1}{l_1}-\beta\bigl(\frac{p-1}{l_1}-1\bigr)\Bigr)-(d-1)\Bigl(\alpha\frac{p-1}{l_1}-\beta\bigl(\frac{p-1}{l_1}-1\bigr)\Bigr)\biggr)-\Bigl(\frac{l_2}{d}-1\Bigr)\delta\Bigl(\frac{p-1}{l_1}+l_1\Bigr)\\
&=\frac{l_2}{d}\biggl(\alpha\frac{p-1}{l_1}-\beta\Bigl(\frac{p-1}{l_1}-1\Bigr)+d\frac{p-1}{l_1}\biggr)-\Bigl(\frac{l_2}{d}-1\Bigr)\delta\Bigl(\frac{p-1}{l_1}+l_1\Bigr)\qquad(\text{use }\alpha=\beta+\delta)\\
&=\frac{l_2}{d}\biggl(\beta+(\delta+d)\frac{p-1}{l_1}\biggr)-\Bigl(\frac{l_2}{d}-1\Bigr)\delta\Bigl(\frac{p-1}{l_1}+l_1\Bigr)\qquad(\text{use definition of }\beta)\\
&=\frac{l_1l_2}{d^2}+\delta\frac{p-1}{l_1}+l_2\frac{p-1}{l_1}+l_1\delta
\end{align*}
and this is exactly what we wanted to have. Now let $n\ge 2$. By induction the block of $A$ corresponding to the indices $t\in pT_{n-1}$ gives 
\begin{equation}\label{n1}
r\le\frac{p^{n_1-1}-1}{l_1}\frac{p^{n_2}-1}{l_2}+l_1\frac{p^{n_2}-1}{l_2}+l_2\frac{p^{n_1-1}-1}{l_1}+\frac{l_1l_2}{d^2}.
\end{equation}
For the remaining block corresponding to the indices $t\in T_n\setminus pT_{n-1}$ we use the quadratic form $y_{l_2/d}\otimes y_{p^{n-2}(p-1)/l_1}\otimes y_d\otimes y_{p-1}$. This gives
\begin{align*}
r&\le \frac{l_2}{d}\biggl(p^{n-2}\frac{p-1}{l_1}\Bigl(d\bigl((p-1)2(\delta+1)-(p-2)(\delta+1)\bigr)-(d-1)\bigl((p-1)2\delta-(p-2)\delta\bigr)\Bigl)\biggr)\\
&\quad-\Bigl(\frac{l_2}{d}-1\Bigr)\Bigl(p^{n-1}\frac{p-1}{l_1}\delta\Bigr)\\
&=\frac{l_2}{d}p^{n-2}\frac{p-1}{l_1}\bigl(d(\delta+1)p-(d-1)\delta p\bigr)-\Bigl(\frac{l_2}{d}-1\Bigr)\Bigl(p^{n-1}\frac{p-1}{l_1}\delta\Bigr)\\
&=\frac{l_2}{d}p^{n-1}\frac{p-1}{l_1}(d+\delta)-\Bigl(\frac{l_2}{d}-1\Bigr)\Bigl(p^{n-1}\frac{p-1}{l_1}\delta\Bigr)\\
&=p^{n-1}\frac{p-1}{l_1}\delta+l_2p^{n-1}\frac{p-1}{l_1}.
\end{align*}
Adding this to \eqref{n1} gives precisely our claim.
\end{proof}

\begin{proof}[Proof of \autoref{2rank}]
Let $D=\langle u\rangle\times\langle v\rangle$ such that $I$ acts on $\langle u\rangle$ and $J$ acts on $\langle v\rangle$. 
If $D\unlhd G$, then there is nothing to prove. Hence, we may assume that $\langle u\rangle\ntrianglelefteq G$. Let $b_D$ be a Brauer correspondent of $B$ in $\C_G(D)$. By Fong-Reynolds (see \cite[Theorem~9.14]{Navarro}), we may assume that $b$ is a block of $N:=\N_G(D,b_D)$. In particular, $\langle u\rangle,\langle v\rangle\unlhd N$. By Külshammer~\cite{Kuelshammer} (see also \cite[Theorem~1.19]{habil}), $b$ is Morita equivalent to a twisted group algebra $\mathcal{O}_\gamma[D\rtimes I(B)]$ where $\mathcal{O}$ is a suitable complete discrete valuation ring. Let $|I|=l_1$ and $|J|=l_2$.
By the Künneth formula, $\cohom^2(I(B),\mathbb{C}^\times)\cong Z_{(l_1,l_2)}$. 
Hence, we may assume that $b$ is Morita equivalent to the block in \autoref{metacyclic} such that $l(b)=l_1l_2/d^2$ for some $d\mid(l_1,l_2)$ (see \cite[Proposition~1.20]{habil}; this does not depend on the choice of the covering group by \cite[Proposition~5.15]{PuigMod}). 

Let $b_u:=b_D^{\C_G(u)}$ so that $(u,b_u)$ is a major $B$-subsection.
Let $B_u:=b_u^{\N_G(\langle u\rangle,b_u)}$. Then
\[B_u=b_D^{\N_G(\langle u\rangle,b_u)}=b^{\N_G(\langle u\rangle,b_u)}.\]
Observe that $B_u$ has defect group $D$ and inertial quotient $I(B)$. Since $\langle u\rangle\ntrianglelefteq G$, induction on $|G|$ shows that $l(B_u)\le l(b)$. Now $b_u$ dominates a block $\overline{b_u}$ with cyclic defect group $\langle v\rangle$ and inertial index $l_2$ (see \cite[Lemma~3]{SambaleC4}). In particular, $l(b_u)=l_2$. By \autoref{halftrans}, $\N_G(\langle u\rangle,b_u)$ acts $\frac{1}{2}$-transitively on $\IBr(\overline{b_u})=\IBr(b_u)$. Suppose that the orbits have length $\alpha$. Then Clifford theory shows that $l(B_u)=l(b_u)l_1/\alpha^2=l_1l_2/\alpha^2$. Therefore, $\alpha\ge d$. 

Next we show that $k(B)-l(B)=k(b)-l(b)$ by using a formula of Brauer (see \cite[Theorem~1.35]{habil}). Let $1\ne w\in D$ and $b_w:=b_D^{\C_G(w)}$. If $w=u^iv^j$ such that $u^i\ne1$ and $v^j\ne 1$, then $l(b_w)=1$. Now assume that $w\in\langle u\rangle$. As usual, $\overline{b_w}$ has defect group $D/\langle w\rangle$. A result by Watanabe~\cite[Theorem~1]{Watanabe1} (see also \cite[Theorem~1.39]{habil}) implies that $l(b_w)=l(\overline{b_w})=l(\overline{b_u})=l_2$. The case $w\in\langle v\rangle$ is similar. Altogether we have proved that 
\begin{equation}\label{diff}
k(B)-l(B)=\frac{p^n-1}{l_1}\frac{p^m-1}{l_2}+l_1\frac{p^m-1}{l_2}+l_2\frac{p^n-1}{l_1}.
\end{equation}
The same arguments apply to $b$ and it follows that $k(B)-l(B)=k(b)-l(b)$. Hence, it suffices to show that $l(B)\le l(b)$.

In order to apply \autoref{general} we need to know the Cartan matrix $\overline{C_u}$ of $\overline{b_u}$. By \autoref{lembasic} we can choose a basic set such that $\overline{C_u}=(t+\delta_{ij})$ with $t=(p^m-1)/l_2$. 
Since it is still complicated to use the formulas in \autoref{general}, we investigate an alternative approach to compute the matrix $M$ in \autoref{general}. 
Let $\widetilde{B}$ be the block from \autoref{metacyclic} where we choose the parameter $d$ in that lemma to be $\alpha$. Let $(u,\beta_u)$ be the corresponding $\widetilde{B}$-subsection. Then the irreducible Brauer characters of $\beta_u$ also split in orbits of length $\alpha$. Moreover, since $\beta_u$ has a normal defect group, the Cartan matrix of $\overline{\beta_u}$ is the same as $\overline{C_u}$ above. Therefore, the generalized decomposition numbers with respect to $(u,\beta_u)$ satisfy exactly the same refined orthogonality relations as the corresponding numbers with respect to $(u,b_u)$. 
Furthermore, we only need to consider one irreducible Brauer character from each orbit. By the special shape of $\overline{C_u}$, the scalar product of two columns $d_i$ and $d_j$ of the generalized decomposition matrix only depends on whether $i=j$ or $i\ne j$.
Hence, the matrix $M$ in \autoref{general} is equivalent to the matrix $A$ we have computed in \autoref{finallem}. It follows that $l(B)\le l_1l_2/\alpha^2\le l(b)$ and we are done.
\end{proof}

\section{A conjecture by Navarro}

Very recently Gabriel Navarro~\cite{NavarrokB} proposed the following conjecture as a generalization of Brauer's $k(B)$-Conjecture for principal blocks.

\begin{Conjecture}
Let $G$ be a finite group with $N\unlhd G$, and let $p$ be a prime. Let $B_G$ and $B_{G/N}$ be the principal $p$-blocks of $G$ and $G/N$ respectively. Then $k(B_G)\le k(B_{G/N})|N|_p$.
\end{Conjecture}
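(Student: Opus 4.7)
The approach is to induct on $|N|$. If $M \unlhd G$ with $1 < M < N$, then $N/M \unlhd G/M$, and applying the conjecture in the two smaller settings yields
\[
k(B_G) \le k(B_{G/M}) |M|_p \le k(B_{G/N}) |N/M|_p |M|_p = k(B_{G/N}) |N|_p.
\]
So one may assume $N$ is a minimal normal subgroup of $G$, i.\,e.\ an elementary abelian $p'$-group, an elementary abelian $p$-group, or a direct product of isomorphic non-abelian simple groups.

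For $N$ an elementary abelian $p'$-group we have $|N|_p = 1$, and the argument should go through Brauer's third main theorem: the principal block $B_G$ covers only the principal block of $N$, and since $N$ is a $p'$-group the latter consists only of $1_N$. Hence every $\chi \in \Irr(B_G)$ satisfies $N \le \Ker(\chi)$ and descends to a unique character in $\Irr(B_{G/N})$, yielding $k(B_G) = k(B_{G/N})$, i.\,e.\ the inequality with equality.

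For $N$ an elementary abelian $p$-group I would partition $\Irr(B_G)$ according to the $G$-orbits on $\Irr(N)$ the characters lie above. The trivial orbit $\{1_N\}$ contributes exactly the $k(B_{G/N})$ inflations from $B_{G/N}$. For a non-trivial orbit of $\lambda \in \Irr(N)$, Clifford theory rewrites $\lvert\Irr(B_G \mid \lambda)\rvert$ as a count of characters in a suitable block of the inertia subgroup $G_\lambda$ lying above $\lambda$. At this stage I would apply the refinement of the orthogonality relations in \autoref{stablecase} to a subsection $(u,b_u)$ with $u \in N$: since $N$ is central in $\C_G(u)$, such a subsection is major, and one may hope to trace the Cartan data of $\overline{b_u}$ back to the corresponding data for $B_{G/N}$. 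Summed over a system of $|N|-1$ non-trivial orbit representatives, this should give the bound $k(B_{G/N})(|N|-1)$, completing this case.

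The hard part will be the remaining case, where $N$ is a direct product of non-abelian simple groups of order divisible by $p$. Here neither a Clifford-theoretic decomposition nor a subsection argument alone appears to deliver a clean bound of the form $k(B_{G/N})|N|_p$, because the orbit lengths on $\Irr(N)$ and the Cartan invariants of the covered blocks of $N$ are difficult to control uniformly. A complete treatment of this case most likely needs the classification of finite simple groups together with explicit block-theoretic information on the principal blocks of the relevant quasi-simple groups. I would therefore realistically expect a partial theorem establishing the conjecture whenever $N$ is solvable (the first two cases above), with the non-solvable case left open or handled only under extra hypotheses on $N$ or on the fusion of $p$-elements.
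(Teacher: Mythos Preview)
The statement you are asked to prove is displayed in the paper as a \emph{Conjecture}; the paper does not prove it. What the paper establishes is only the special case recorded in \autoref{navarro}: both $N$ and $G/N$ have cyclic Sylow $p$-subgroups. That proof exploits very specific structure---metacyclic Sylow subgroups, Dade's theory for cyclic defect, explicit Cartan matrices of $\overline{b_u}$---and then feeds this into \autoref{stablecase}. There is no general argument to compare against.

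Your reduction to a minimal normal subgroup and the treatment of the $p'$-case are fine. The genuine gap is the elementary abelian $p$-group case, which you describe only as a hope. Two concrete problems: first, your subsection bound via \autoref{stablecase} controls $k(B_G)$ in terms of the Cartan matrix of $\overline{b_u}$, a block of $\C_G(u)/\langle u\rangle$, and there is no mechanism in the paper (or in your plan) linking that Cartan data to $k(B_{G/N})$ for the unrelated quotient $G/N$. In the paper's special case this link exists only because both $\overline{b_u}$ and $B_{G/N}$ have cyclic defect and hence explicitly computable invariants. Second, already the subcase where $G/N$ is a $p'$-group shows the difficulty: then $k(B_{G/N})=1$ and your target inequality becomes $k(B_G)\le|N|$, i.e.\ Brauer's $k(B)$-Conjecture for a principal block with normal elementary abelian Sylow $p$-subgroup. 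That is the $k(GV)$-problem, a deep theorem not obtainable from the orthogonality refinements developed here. So even the ``solvable $N$'' outcome you anticipate is out of reach by this route; the paper's contribution is precisely to isolate a narrow hypothesis (cyclic Sylow on both sides) under which the subsection machinery does close the gap.
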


Here, as usual $|N|_p$ denotes the $p$-part of the order of $N$.
We use \autoref{stablecase} above to prove this conjecture in a special case.

\begin{Theorem}\label{navarro}
Let $G$ be a finite group with $N\unlhd G$ such that the Sylow $p$-subgroups of $N$ and $G/N$ are cyclic for a prime $p$.
Then for the principal blocks we have $k(B_G)\le k(B_{G/N})|N|_p$.
\end{Theorem}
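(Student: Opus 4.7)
I would proceed by induction on $|N|$. The base case $N = 1$ is trivial. Suppose $N \neq 1$ and let $M$ be a minimal normal subgroup of $G$ contained in $N$. Since the Sylow $p$-subgroups of $N$ are cyclic, $M$ is either an elementary abelian $p'$-group or cyclic of order $p$. If $M$ is a $p'$-group, then the principal block $B_G$ covers only the principal block of $M$, so every $\chi \in \Irr(B_G)$ has $M$ in its kernel, and inflation gives a bijection $\Irr(B_{G/M}) \to \Irr(B_G)$. If instead $M \cong Z_p$ and $M \le \Z(G)$, the same conclusion follows from the central character argument, since $\omega_{B_G}$ must be trivial on $M$. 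In either of these two reductions $k(B_G) = k(B_{G/M})$, and the inductive hypothesis applied to $(G/M, N/M)$ (which still satisfies the cyclic-Sylow assumption and has $|N/M| < |N|$) yields $k(B_G) \le k(B_{G/N})\,|N/M|_p \le k(B_{G/N})\,|N|_p$.

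We may therefore assume $M \cong Z_p$ and $M \not\le \Z(G)$. Here the strategy is to prove the sharper inequality $k(B_G) \le p \cdot k(B_{G/M})$; combined with the induction applied to $N/M$ this gives $k(B_G) \le p\cdot k(B_{G/N})\,|N/M|_p = k(B_{G/N})\,|N|_p$ as required. Let $u$ generate $M$. As $M$ is a non-trivial normal $p$-subgroup of order $p$, we have $M \cap \Z(P) \neq 1$ and hence $u \in \Z(P)$ for every Sylow $p$-subgroup $P$ of $G$. Consequently $(u,b_u)$ is a \emph{major} $B_G$-subsection with $b_u$ the principal block of $\C_G(u) = \C_G(M)$, and $\mathcal{N} = G/\C_G(M)$ is a non-trivial cyclic subgroup of $\Aut(Z_p)$ of order $e \mid p-1$.

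The plan is now to invoke \autoref{stablecase} for $(u,b_u)$. Two ingredients are needed: (i) the $\mathcal{N}$-stability of $\IBr(b_u)$, and (ii) a suitable quadratic form. Stability should follow from \autoref{halftrans}: the Sylow $p$-subgroups of $\C_G(M)$ coincide with those of $G$ and inherit the cyclic-by-cyclic metacyclic structure, and the cyclic $p'$-group $\mathcal{N}$ then acts trivially on the Brauer characters of the principal block $b_u$. Taking for (ii) the quadratic form attached to the Dynkin diagram of type $A_{l(b_u)}$, and using \autoref{lembasic}-style control of the Cartan matrix $\overline{C_u}$ of $\overline{b_u}$ (the principal block of $\C_G(M)/M$, which again has cyclic-by-cyclic Sylow structure), \autoref{stablecase} delivers a bound of the form $k(B_G) \le k(\langle u\rangle \rtimes \mathcal{N})\sum q_{ij}c_{ij}$.

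The final and most delicate step is to identify the right-hand side with $p \cdot k(B_{G/M})$. Since $\C_G(M)/M \unlhd G/M$ with cyclic $p'$-quotient $\mathcal{N}$ and $B_{G/M}$ covers $\overline{b_u}$, Clifford theory (again aided by \autoref{halftrans}) expresses $k(B_{G/M})$ in terms of the same Cartan entries $c_{ij}$ and the orbit structure of $\mathcal{N}$ on $\Irr(\overline{b_u})$, and one verifies that this matches $k(\langle u\rangle \rtimes \mathcal{N})\sum q_{ij}c_{ij}/p$, with the factor $p = |\langle u\rangle|$ being exactly the one present in the shape of \autoref{stablecase}. I expect the main obstacle to be this last matching step: establishing $\mathcal{N}$-stability of $\IBr(b_u)$ in the generality of non-cyclic defect $P$ and simultaneously performing the Clifford-theoretic bookkeeping relating the orbits on $\Irr(\overline{b_u})$ to $k(B_{G/M})$ so that the two sides balance on the nose.
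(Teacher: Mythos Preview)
Your inductive reduction has two genuine gaps.

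First, the central case $M\cong Z_p\le\Z(G)$ is wrong: it is not true that every $\chi\in\Irr(B_G)$ has $M$ in its kernel. Already $G=Z_p$, $M=G$ gives $k(B_G)=p\ne 1=k(B_{G/M})$. The central character of the \emph{block} is a map on $p'$-class sums and says nothing about the values $\omega_\chi(z)$ for a central $p$-element $z$; these range over all $p$-th roots of unity as $\chi$ varies in $B_G$. So the equality $k(B_G)=k(B_{G/M})$ fails here, and you would instead need $k(B_G)\le p\cdot k(B_{G/M})$, which is essentially the same problem as in your non-central case.

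Second, and more seriously, in the non-central case your appeal to \autoref{halftrans} and a ``\autoref{lembasic}-style control'' is not justified. Both of those results are stated and proved for blocks with \emph{cyclic} defect group. Your $b_u$ is the principal block of $\C_G(u)$ with defect group $P$, and $\overline{b_u}$ has defect group $P/M$; since $|M|=p$ while $P$ is only metacyclic, $P/M$ need not be cyclic (e.g.\ $P\cong Z_{p^2}\times Z_{p^2}$ with $M$ the order-$p$ subgroup of one factor). Without cyclic defect you have neither the $\tfrac12$-transitivity statement nor the Cartan shape $(m+\delta_{ij})$, so the hypothesis of \autoref{stablecase} and the quadratic-form computation both collapse. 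The final matching $k(\langle u\rangle\rtimes\mathcal{N})\sum q_{ij}c_{ij}=p\cdot k(B_{G/M})$ is then not merely delicate but undefined.

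The paper avoids this by \emph{not} inducting on $|N|$. Instead it observes that $P$ is metacyclic, disposes of the nilpotent and non-abelian-$P$ cases using known structural results for such blocks, and in the abelian case decomposes $P=\langle u\rangle\times\langle v\rangle$ into $\N_G(P)$-invariant cyclic factors. The point is that $u$ is taken to generate a \emph{full} cyclic factor, so $\overline{b_u}$ genuinely has cyclic defect group $\langle v\rangle$; then \autoref{halftrans} and \autoref{stablecase} apply directly, and the resulting bound $\bigl(\tfrac{|\langle u\rangle|-1}{l}+l\bigr)|\langle v\rangle|$ is compared to $k(B_{G/N})|N|_p$ by an elementary inequality. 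Your approach could perhaps be repaired by always reducing to this abelian situation first, but as written it does not go through.
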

\begin{proof}
Let $P$ be a Sylow $p$-subgroup of $G$. Then $P\cap N$ and $P/P\cap N\cong PN/N$ are Sylow subgroups of $N$ and $G/N$ respectively. By hypothesis, these groups are cyclic and therefore $P$ is metacyclic. If $B_G$ is nilpotent, then $G$ is $p$-nilpotent and $B_{G/N}$ must be nilpotent too. In this case the well-known theorem by Broué-Puig~\cite{BrouePuig} says that $k(B_G)=k(P)\le k(P/P\cap N)|P\cap N|=k(B_{G/N})|N|_p$. Thus, we may assume that $B$ is non-nilpotent. 

Now we deal with the case where $P$ is non-abelian. If $p=2$, then \cite[Theorem~8.1]{habil} shows that $P$ is a dihedral group, a semidihedral group or a quaternion group. This implies that $|P/P\cap N|=2$ and $k(B_G)\le |P|=k(B_{G/N})|N|_p$ follows from \cite[Corollary~8.2]{habil}. Hence, let $p>2$. Here \cite[Theorem~8.8]{habil} shows that $P$ is a split extension of two cyclic groups. From the proof of this result, we see further that $\N_G(P)$ acts non-trivially on $P'$. Hence, $\N_G(P)$ acts non-trivially on $P\cap N$ as well. Moreover, the focal subgroup $G'\cap P$ is cyclic. This implies that $\N_G(P)$ acts trivially on $P/P\cap N$ (see \cite[proof of Theorem~8.8]{habil}). 
Since $\N_G(PN/N)$ permutes the Sylow $p$-subgroups of $PN$, the Frattini argument implies that $\N_G(PN/N)=\N_G(P)N$. Therefore, $\N_G(PN/N)$ acts trivially on $PN/N$ and $B_{G/N}$ is nilpotent. Again the claim follows from $k(B_G)\le|P|$.

Finally, it remains to handle the abelian groups $P$. We may assume that $1<P\cap N<P$. Since $\N_G(P)$ normalizes $P\cap N$, it also normalizes a non-trivial subgroup of $\Omega(P)=\{x\in P:x^p=1\}$. By Maschke's Theorem, $\Omega(P)$ decomposes into a direct sum of $\N_G(P)$-invariant cyclic subgroups. We show that this is also true for $P$. By \cite[Theorem~5.2.2]{Gorenstein}, we may assume that $P$ is a direct product of two isomorphic cyclic groups in order to show this claim. Choose an element $x\in P$ of maximal order such that $\Omega(\langle x\rangle)$ is $\N_G(P)$-invariant. Suppose that there exists $g\in\N_G(P)$ such that $gxg^{-1}\notin\langle x\rangle$. Then Burnside's Basis Theorem shows $P=\langle x,gxg^{-1}\rangle$ and we derive the contradiction 
\[\Omega(\langle x\rangle)=\Omega(\langle x\rangle)\cap g\Omega(\langle x\rangle)g^{-1}=\Omega(\langle x\rangle)\cap\Omega(\langle gxg^{-1}\rangle)=\Omega(\langle x\rangle\cap\langle gxg^{-1}\rangle)=1.\]
Hence, we may assume that $\langle x\rangle$ is $\N_G(P)$-invariant. By \cite[Theorem~3.3.2]{Gorenstein}, $\langle x\rangle$ contains an $\N_G(P)$-invariant complement in $P$.

Thus, let $P=\langle u\rangle\times\langle v\rangle$ with $\N_G(P)$-invariant subgroups $\langle u\rangle$ and $\langle v\rangle$ (not necessarily of the same order). Let $P\cap N=\langle u^iv^j\rangle$ for some $i,j\in\mathbb{Z}$. Since $P/P\cap N$ is cyclic, so is $P/\langle u^i,v^j\rangle$. Thus, we may assume that $j=1$ without loss of generality. Let $l=\lvert\N_G(P)/\C_G(u)\rvert$. Then $l$ is the inertial index of $B_{G/N}$. Hence, by Dade~\cite{Dade} (see also \cite[Theorem~8.6]{habil}), we have $k(B_{G/N})=\frac{|P/P\cap N|-1}{l}+l$. Now let $b_u$ be the principal block of $\C_G(u)$, so that $(u,b_u)$ is a $B_G$-subsection. Then $b_u$ dominates the principal block $\overline{b_u}$ of $\C_G(u)/\langle u\rangle$ with cyclic defect group $P/\langle u\rangle\cong\langle v\rangle$. By \autoref{halftrans}, all irreducible Brauer characters of $b_u$ are stable under $\N_G(\langle u\rangle)$. Hence, \autoref{stablecase} (applied with the now familiar quadratic form corresponding to the Dynkin diagram of type $A$) implies
\[k(B_G)\le\Bigl(\frac{|\langle u\rangle|-1}{l}+l\Bigr)|\langle v\rangle|.\]
It suffices to show that
\[\Bigl(\frac{|\langle u\rangle|-1}{l}+l\Bigr)|\langle v\rangle|\le \Bigl(\frac{|P/P\cap N|-1}{l}+l\Bigr)|P\cap N|.\]
This reduces easily to $|\langle v\rangle|\le|P\cap N|$ which is true. The claim follows.
\end{proof}

The arguments of the proof of \autoref{navarro} can be used to verify the conjecture for many groups with metacyclic Sylow $p$-subgroups $P$. However, at the moment not all of these groups can be handled. For instance, image that $P\cong Z_{p^2}^2$ and $P\cap N\cong Z_p^2$. In these cases, the claim would follow from Alperin's Weight Conjecture. 
By \cite[Section~13.1]{habil}, it is not hard to prove the conjecture in case $p=2$ and $|P|\le 16$. We have also used GAP~\cite{GAP47} to check the conjecture for all groups in the libraries of small groups and perfect groups. 

\section{Concluding remarks}

In view of \autoref{examp} and \autoref{2rank}, it appears that the inequalities for $k(B)$ get better if not all irreducible Brauer characters are stable. This seems to be related to the fact that the number of simple modules of a twisted group algebra is not larger than the number for the corresponding untwisted group algebra.
Consequently, one might hope that \autoref{stablecase} also holds without the hypothesis on the Brauer characters. It would then be a direct generalization of both \autoref{outer} and \autoref{mainfusion}. 
The case $p=2$ has already been done in \cite[Theorem~5.2]{habil}. The odd case seems much harder to prove and we did not find a counterexample either, but we can provide a partial answer which generalizes \cite[Proposition~5.13]{habil}.

\begin{Proposition}
Let $B$ be a $p$-block of a finite group $G$. Let $(u,b_u)$ be a major $B$-subsection such that $b_u$ dominates a block $\overline{b_u}$ of $\C_G(u)/\langle u\rangle$. Suppose that $l(b_u)=2$, and let $\overline{C_u}=(c_{ij})$ be the Cartan matrix of $\overline{b_u}$ up to basic sets. Let $r:=\lvert\N_G(\langle u\rangle,b_u)/\C_G(u)\rvert$. Then
\[k(B)\le\Bigl(\frac{|\langle u\rangle|-1}{r}+r\Bigr)\bigl(c_{11}+c_{22}-c_{12}\bigr).\]
\end{Proposition}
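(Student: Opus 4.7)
The plan is to split according to the action of $\mathcal{N}$ on $\IBr(b_u)$: since $l(b_u)=2$, either $\mathcal{N}$ fixes both Brauer characters $\phi_1,\phi_2$ of $b_u$, or some element of $\mathcal{N}$ swaps them. In the stable case I would apply \autoref{stablecase} with the positive definite integral quadratic form
\[q(x_1,x_2)=x_1^{2}-x_1x_2+x_2^{2},\]
which is the Gram form of the Dynkin diagram $A_2$, so that $\sum_{1\le i\le j\le 2}q_{ij}c_{ij}=c_{11}+c_{22}-c_{12}$. This immediately yields the estimate $k(B)\le k(\langle u\rangle\rtimes\mathcal{N})(c_{11}+c_{22}-c_{12})$. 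Since $(u,b_u)$ is major and $p$ is odd, $\mathcal{N}$ is cyclic of order $r\mid p-1$ (by the proof of \autoref{mainfusion}) and acts faithfully on $\langle u\rangle$; thus $\langle u\rangle\rtimes\mathcal{N}$ is a Frobenius group whose class equation produces $k(\langle u\rangle\rtimes\mathcal{N})=\frac{|\langle u\rangle|-1}{r}+r$. The case $p=2$ proceeds analogously via the $p=2$ branch in the proof of \autoref{stablecase}.

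If on the other hand some $\gamma\in\mathcal{N}$ exchanges $\phi_1$ and $\phi_2$, then the common stabilizer $\mathcal{N}_0:=\mathcal{N}_{\phi_1}=\mathcal{N}_{\phi_2}$ has index $2$ in $\mathcal{N}$, and $\mathcal{N}$-invariance of the Cartan matrix under the swap forces $c_{11}=c_{22}$ (and of course $c_{12}=c_{21}$). Here I would apply \autoref{general} directly. Since both Brauer characters share the same stabilizer $\mathcal{N}_0$, the weight functions $w^\tau_{ij}(\gamma)$ do not depend on $\tau$; setting $W(\gamma):=(w^{\phi_1}_{i-1,j-1}(\gamma))_{i,j=1}^{m}$, a short computation shows that the block matrix $M$ of \autoref{general} takes the Kronecker-sum shape
\[M=\begin{pmatrix}c_{11}&c_{12}\\c_{12}&c_{11}\end{pmatrix}\otimes W(1)+\begin{pmatrix}c_{12}&c_{11}\\c_{11}&c_{12}\end{pmatrix}\otimes W(\gamma).\]
Passing to the integral basis of \autoref{intbas} for the fixed field of $\mathcal{N}_0$ collapses each $W(\gamma)$-block to size $m/|\mathcal{N}_0|$, after which \autoref{semidirect}, applied to $\langle u\rangle\rtimes\mathcal{N}_0$, identifies the reduced $W(1)$-contribution with precisely the matrix handled in the proof of \autoref{stablecase}. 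Using $c_{11}=c_{22}$ and tensoring with the $A_2$ form above via \autoref{lemtensor} then delivers the same bound $\bigl(\frac{|\langle u\rangle|-1}{r}+r\bigr)(c_{11}+c_{22}-c_{12})$ as in the stable case.

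The main difficulty is the swap case. A naive reduction to the intermediate subgroup $H\le\N_G(\langle u\rangle,b_u)$ with $H/\C_G(u)=\mathcal{N}_0$, followed by an application of \autoref{stablecase} and a Clifford-theoretic factor of $2$, would yield only the weaker estimate $2\bigl(\frac{2(|\langle u\rangle|-1)}{r}+\frac{r}{2}\bigr)(c_{11}+c_{22}-c_{12})$. Recovering the sharp bound therefore requires the full Kronecker structure of $M$ combined with the equality $c_{11}=c_{22}$, and a careful verification that the rank count from \autoref{stablecase} still goes through in the presence of the non-trivial swap block; this is the technical heart of the argument, and the place where one might hope that an eventual extension of \autoref{stablecase} without the stability hypothesis on $\IBr(b_u)$ would make the proof uniform.
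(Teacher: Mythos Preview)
Your split into the stable and swapped cases is correct, and your treatment of the stable case agrees with the paper: apply \autoref{stablecase} with the $A_2$-form, and use that for a major subsection $\mathcal{N}$ has order $r\mid p-1$ so that $k(\langle u\rangle\rtimes\mathcal{N})=\frac{|\langle u\rangle|-1}{r}+r$.

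In the swap case, however, your plan has a genuine gap. You correctly compute that $M$ has the shape $C_0\otimes W(1)+C_1\otimes W(\gamma)$ with $C_0\ne C_1$, but precisely because of this $M$ is \emph{not} a Kronecker product, so \autoref{lemtensor} does not apply. Your closing sentence concedes that ``the rank count from \autoref{stablecase}'' must be re-verified in the presence of the swap block, but you give no indication of how; and in fact the paper does \emph{not} proceed this way. Instead it exploits an arithmetic fact you have not used: working with the \emph{actual} Cartan matrix of $\overline{b_u}$ (not an arbitrary basic set), the swap forces $c_{11}=c_{22}$ and hence $\det\overline{C_u}=(c_{11}-c_{12})(c_{11}+c_{12})$ is a $p$-power, so $c_{11}\pm c_{12}$ are themselves $p$-powers. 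This puts $\overline{C_u}$, up to a scalar $p^{d_1}$, into the form $\bigl(\begin{smallmatrix}\alpha+1&\alpha\\\alpha&\alpha+1\end{smallmatrix}\bigr)$, which is exactly the Cartan matrix of the principal block of $Z_{p^{d_2-d_1}}\rtimes Z_2$. One is then in the concrete situation of \autoref{metacyclic} (with $d=\alpha=2$) and the explicit computation of \autoref{finallem} yields a bound strictly smaller than $\bigl(\frac{|\langle u\rangle|-1}{r}+r\bigr)(c_{11}+c_{22}-c_{12})$.

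There is a second point your plan misses entirely: the statement allows $\overline{C_u}$ only \emph{up to basic sets}, but the equality $c_{11}=c_{22}$ you rely on holds only for a swap-compatible basic set. The paper therefore adds a separate argument showing that $c_{11}+c_{22}-c_{12}$ is minimal for the actual Cartan matrix among all basic set transforms (via an explicit $U$ with $U^{\text{T}}U=\overline{C_u}$ and the $A_2$-form), so that proving the inequality for this one basic set suffices. Without this step your bound is not established in the generality claimed.
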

\begin{proof}
By \autoref{stablecase}, we may assume that $\N_G(\langle u\rangle,b_u)$ interchanges the two irreducible Brauer characters of $b_u$. 
We suppose first that $\overline{C_u}$ is the actual Cartan matrix (not up to basic sets). Then it follows that $c_{11}=c_{22}$. Since the determinant of $\overline{C_u}$ is a $p$-power, we have
\[(c_{11}-c_{12})(c_{11}+c_{12})=c_{11}c_{22}-c_{12}^2=\det(\overline{C_u})=p^d\]
for some $d\in\mathbb{N}$. Let $c_{11}-c_{12}=p^{d_1}$ and $c_{11}+c_{12}=p^{d_2}$. Then $c_{11}=(p^{d_1}+p^{d_2})/2$ and $c_{12}=(p^{d_2}-p^{d_1})/2$. Setting $\alpha:=(p^{d_2-d_1}-1)/2$ we have
\[\overline{C_u}=p^{d_1}\begin{pmatrix}
\alpha+1&\alpha\\\alpha&\alpha+1
\end{pmatrix}.\]
Hence, $p^{-d_1}\overline{C_u}$ is the Cartan matrix of the principal block of $Z_{p^{d_2-d_1}}\rtimes Z_2$. So we are basically in the situation of \autoref{metacyclic} and \autoref{general} implies
\begin{align}\label{l2better}
k(B)&\le p^{d_1}\Bigl(\frac{|\langle u\rangle|-1}{r}\frac{p^{d_2-d_1}-1}{2}+2\frac{|\langle u\rangle|-1}{r}+r\frac{p^{d_2-d_1}-1}{2}+\frac{r}{2}\Bigr)\nonumber\\
&= p^{d_1}\Bigl(\frac{|\langle u\rangle|-1}{r}\frac{p^{d_2-d_1}+3}{2}+\frac{rp^{d_2-d_1}}{2}\Bigr)<\Bigl(\frac{|\langle u\rangle|-1}{r}+r\Bigr)\bigl(c_{11}+c_{22}-c_{12}\bigr)
\end{align}
(cf. \autoref{finallem}, we omit the details). Note that $r$ must be even. Now it remains to show that the right hand side of \eqref{l2better} does not get smaller if we change the basic set. For
\[U:=\begin{pmatrix}
1&0\\
\vdots&\vdots\\
1&0\\
1&1\\
\vdots&\vdots\\
1&1\\
0&1\\
\vdots&\vdots\\
0&1
\end{pmatrix}\begin{aligned}
&\hspace{-2mm}\left.\begin{matrix}
\vphantom{1}\\[1mm]
\vphantom{1}\\
\vphantom{1}
\end{matrix}\right\}c_{11}-c_{12}\\
&\hspace{-2mm}\left.\begin{matrix}
\vphantom{1}\\[1mm]
\vphantom{1}\\
\vphantom{1}
\end{matrix}\right\}c_{12}\\
&\hspace{-2mm}\left.\begin{matrix}
\vphantom{1}\\[1mm]
\vphantom{1}\\
\vphantom{1}
\end{matrix}\right\}c_{22}-c_{12}\\
\end{aligned}\]
we have $U^\text{T}U=\overline{C_u}$ and $U$ has just $c_{11}+c_{22}-c_{12}$ rows. Suppose that $S\in\GL(2,\mathbb{Z})$, and let $S^\text{T}\overline{C_u}S=(c'_{ij})$. Then the matrix $US$ still has $c_{11}+c_{22}-c_{12}$ (non-zero) rows and satisfies $(US)^\text{T}(US)=(c'_{ij})$. An application of the quadratic form corresponding to the Dynkin diagram of type $A_2$ (as in \autoref{stablecase}) shows that $c_{11}+c_{22}-c_{12}\le c'_{11}+c'_{22}-c'_{12}$ and we are done.
\end{proof}

In praxis, if $\overline{C_u}$ and $\mathcal{N}$ are known and the action on $\IBr(b_u)$ is unknown, one can consider all possible actions, apply \autoref{general} in each case and take the bound from the worst case.

We finish the paper with some more remarks:
\begin{enumerate}[(i)]
\item The idea of \autoref{2rank} goes back to Kiyota~\cite{Kiyota}. On page~40 he considers the case $p=3$ and $D\rtimes I(B)\cong\mathfrak{S}_3^2$ where $b_u$ has two irreducible Brauer characters which are interchanged by $\N_G(\langle u\rangle,b_u)$. Using the integral basis $1$, $e^{2\pi i/3}$ he obtains the matrix $\bigl(\begin{smallmatrix}5&1\\1&2
\end{smallmatrix}\bigr)$. In comparison, we use the integral basis $e^{2\pi i/3}$, $e^{-2\pi i/3}$ and get the equivalent matrix $\bigl(\begin{smallmatrix}5&4\\4&5
\end{smallmatrix}\bigr)$ in \autoref{finallem}. This is, by the way, the Cartan matrix of the generalized dihedral group $Z_3^2\rtimes Z_2$. It is a bit harder to find such non-trivial examples for $p=2$. Gluck~[Question~1 and Section~2]\cite{Gluck} gave an example with $|D|=32$.

\item Whenever we have a major $B$-subsection $(u,b_u)$ such that $\overline{b_u}$ has cyclic defect groups, it is obvious that the defect group $D$ of $B$ is abelian of rank at most $2$. Hence, \autoref{2rank} covers a large portion of these cases.
However, in principle it is possible to do similar things for non-major subsections. The details are necessarily more complicated. In particular, one needs to combine \autoref{semidirect} and \autoref{metacyclic}. Since we do not know any interesting applications, we chose to stick to major subsections.

\item Brauer's permutation lemma (see \cite[Lemma~IV.6.10]{Feit}) asserts in most cases that the number of integral columns of the generalized decomposition matrix of a block $B$ coincides with the number of $p$-rational irreducible characters of $B$. Thus, if we have some knowledge about the fusion system of $B$, we can obtain lower bounds for the number of integral decomposition numbers $d^u_{\chi\phi}$ where $(u,b_u)$ is a fixed subsection. This has consequences for the possible factorizations $M=Q^\text{T}Q$ in \autoref{general}. Additional information can be taken into account by using the Broué-Puig $*$-construction in order to compare the contributions of distinct subsections. We have successfully applied these arguments in \cite[Lemma~1]{Sambalerank3}.

\item Judging from the examples we have considered, it seems that the non-zero elementary divisors of the matrix $M$ in \autoref{general} are always $p$-powers. We do not know if these numbers have interesting interpretations. At least, $M$ does not seem to be related to the Cartan matrix of $B_u=b_u^{\N_G(\langle u\rangle,b_u)}$ (see \autoref{examp}). Moreover, none of the inequalities 
\begin{align*}
k(B)\le k(b_u),&& k(B)\le|\langle u\rangle|k(\overline{b_u}),&& k(B)\le k(B_u)
\end{align*} 
is true in general. 
Counterexamples are given by the principal $3$-blocks of the groups $\texttt{SmallGroup}(108,17)$, $\texttt{SmallGroup}(324,117)$ and $\mathfrak{S}_3^2$ respectively (see \cite{GAP47}).

\item The Cartan matrix of (the principal block of) $G=\mathfrak{A}_4^2$ is given by $C=(c_{ij})=(1+\delta_{ij})_{i,j=1}^3\otimes(1+\delta_{ij})_{i,j=1}^3$. We have shown in \cite[end of Section~4.1]{habil} that there is no positive definite, integral quadratic form $q(x)=\sum_{1\le i\le j\le9}{q_{ij}x_ix_j}$ such that
\begin{equation}\label{A42}
\sum_{1\le i\le j\le 9}{q_{ij}c_{ij}}=16=k(G).
\end{equation}
Now let $q$ be the tensor product of the quadratic form corresponding to the Dynkin diagram of type $A_3$ with itself. Then $q$ is not integral anymore, but by \autoref{lemtensor}, $q$ can be used to bound $k(B)$ and it turns out that \eqref{A42} holds. Therefore, we really benefit from giving up integrity. In fact, we do not know any Cartan matrix where this method fails to give Brauer's $k(B)$-Conjecture $k(B)\le|D|$.


\item Thinking more general, one can try to replace subsections by subpairs $(Q,b_Q)$ where $Q\le D$ and $b_Q$ is a Brauer correspondent of $B$ in $\C_G(Q)Q$. This seems a bit out of reach for the following reason. The block $b_D$ is nilpotent and has Cartan matrix $(|D|)$. Hence, a general version of \autoref{outer} would already imply Brauer's $k(B)$-Conjecture for all blocks.
\end{enumerate}

\section*{Acknowledgment}
This work is supported by the German Research Foundation (project SA 2864/1-1) and the Daimler and Benz Foundation (project 32-08/13). The author thanks Gabriel Navarro for the hospitality at the University of Valencia and for showing him his conjecture. The author is also grateful to Britta Späth and Burkhard Külshammer for answering some questions concerning \autoref{halftrans}.

\end{document}